\documentclass[11pt, leqno]{article} 
\usepackage[a4paper, top=2.5cm, bottom=2.5cm, left=2.5cm, right=2.5cm]{geometry}
\usepackage[english]{babel}
\usepackage{enumitem}
\setlist[itemize]{label=$\bullet$}

\usepackage{xcolor}

\usepackage{amsmath}  % Advanced math typesetting
\usepackage{amsthm}   % Theorem environments and proof support
\usepackage{amssymb}  % Mathematical symbols

\newtheorem{theorem}{Theorem}[section]
\newtheorem{lemma}[theorem]{Lemma}
\newtheorem{proposition}[theorem]{Proposition}
\newtheorem{remark}[theorem]{Remark}
\newtheorem{counterexample}[theorem]{Counterexample}
\newtheorem{fact}[theorem]{Fact}
\newtheorem{corollary}[theorem]{Corollary}
\numberwithin{equation}{section}
\usepackage{dsfont}
\usepackage{hyperref}
\hypersetup{
    colorlinks=true,
    linkcolor=blue,
    citecolor=cyan,
    urlcolor=teal,
}
\theoremstyle{definition}

\title{\textbf{Dimension dependence and dimension-free $\ell^2$~estimates for variation seminorms of spherical means on~the~hypercube}}
\author{
    \textbf{Rafa{\l} {\L}y\.zwa}%
    \thanks{Mathematical Institute, University of Wroc{\l}aw, pl. Grunwaldzki 2, 50--384 Wroc{\l}aw, Poland.\\
    Email: \texttt{rafal.lyzwa@math.uni.wroc.pl}\\
    Research supported by the National Science Centre, Poland (Narodowe Centrum Nauki), Grant 2022/46/E/ST1/00036.
    }
}
\date{}

\begin{document}

\maketitle

\begin{abstract}
    We prove that the $\ell^2 \to \ell^2$ norm of the $r$-variation seminorm of spherical means on the hypercube admits no dimension-free bounds for any $r \in [1, \infty)$ when the variation is taken over all possible radii. Furthermore, we establish that if the radii are restricted to a fixed parity, dimension-free estimates hold for all $r \in (2, \infty)$.
\end{abstract}

% Keywords and MSC Codes
\vspace{1em}
\noindent\textbf{Keywords:} dimension-free estimate, variation seminorm, spherical mean, averaging operator, hypercube, Hamming cube, Krawtchouk polynomial, noise semigroup.

\noindent\textbf{2020 Mathematics Subject Classification:} Primary 42C10; Secondary 41A63, 42B15.

%\tableofcontents

\section{Introduction}

\subsection{Motivation}

Let $G \subseteq \mathbb{R}^n$ be a centrally symmetric convex body (i.e., a closed, bounded, and centrally symmetric convex set with a non-empty interior). For $f \in L^1_{\text{loc}}(\mathbb{R}^n)$ and $t > 0$, we define the integral Hardy--Littlewood averaging operator associated with $G$ as
\begin{equation*}
    \mathbf M_t^G f(x) := \frac{1}{|tG|} \int_{tG} f(x - y) \, dy.
\end{equation*}
For $p \in (1, \infty]$, let $C_p(n, G)$ be the best constant in the maximal inequality
\begin{equation*}
    \left\| \sup_{t>0} |\mathbf M_t^G f| \right\|_{L^p(\mathbb{R}^n)} \leqslant C_p(n, G) \big\|f\big\|_{L^p(\mathbb{R}^n)}.
\end{equation*}
It is well known that $C_p(n, G) < \infty$ for all $p \in (1, \infty]$. A fundamental problem in this context is the existence of a dimension-independent estimate of $C_p(n, G)$. In the case of Euclidean balls ($G = B^2$), Stein \cite{Stein82} proved that $C_p(n, B^2)$ is bounded independently of $n$ for all ${p \in (1, \infty]}$. For general centrally symmetric convex bodies, the study of high-dimensional bounds in geometry and analysis was initiated by Bourgain in his seminal work \cite{Bourgain}. There, he established the existence of a dimension-free estimate of $C_2(n,G)$ (the case $p=2$), a~result that was subsequently extended to $p \in \big(\frac 32, \infty\big]$ by Bourgain \cite{Bourgain-2} and Carbery \cite{Carbery}. Additionally, if the supremum is restricted to the dyadic set $t \in \{2^m : m\in\{0,1,2,...\}\}$, the constant $C_p(n, G)$ has a bound independent of the body $G$ for the full range $p \in (1, \infty]$.

It is a major open conjecture that the maximal inequality holds for all $p \in (1, \infty]$ and for all centrally symmetric convex bodies $G \subseteq \mathbb{R}^n$, with the constant $C_p(n, G)$ bounded independently of the dimension $n$. This conjecture is well-founded, having already been verified for substantial classes of convex bodies. In particular, for the family of $\ell^q$-balls $G = B^q$, dimension-free estimates covering the full range $p \in (1, \infty]$ have been successfully established. Specifically, Müller~\cite{Muller} resolved the case for $q \in [1, \infty)$, while Bourgain~\cite{Bourgain-3} settled the case of the cube ($q = \infty$), with both results yielding constants that depend exclusively on $p$ and $q$.

More recently, these dimension-free results have been significantly strengthened to the setting of variational inequalities. Bourgain, Mirek, Stein, and Wróbel \cite{BMSW-GAFA} established variational estimates that provide quantitative control over the fluctuations of the averages, independent of the dimension.

Dimension-independence in maximal and variational inequalities has also been explored in various discrete settings. Bourgain, Mirek, Stein, and Wróbel \cite{BMSW} established dimension-free $\ell^p$~estimates for the maximal operator associated with discrete means over cubes in $\mathbb{Z}^n$ for every $p\in \big(\frac 32,\infty\big]$. In the same setting, they proved dimension-free bounds for the corresponding \linebreak $r$-variation seminorm for all $p\in \big(\frac 32,4\big)$ and $r\in (2,\infty)$. In the context of the hypercube $\{0,1\}^n$, Harrow, Kolla, and Schulman \cite{HKS} proved dimension-free $\ell^2$ estimates for the maximal operator of spherical means on the hypercube, a result that Krause \cite{Krause} subsequently extended to all $\ell^p$ for $p>1$.

A key tool used to obtain the results of this paper is the family of Krawtchouk polynomials. In the context of dimension-free bounds, these polynomials have recently been employed by Harrow, Kolla, and Schulman \cite{HKS}, Kosz, Niksiński, and Wróbel \cite{KNW}, Krause \cite{Krause}, Mirek, Szarek, and Wróbel \cite{MSW}, Niksiński \cite{Niksinski-1, Niksinski-2}, and Niksiński and Wróbel \cite{NW}.

\subsection{Statement of the results}

This paper focuses on dimension-free $\ell^2$ estimates for the variation seminorms of spherical means on the hypercube. Before stating the main results, we introduce the necessary notation and definitions.
\begin{itemize}
    \item Let $n\in\{ 1,2,3,...\}$ and let $\mathbb I^n:=\{0,1\}^n$ be the $n$-dimensional hypercube (also called the Hamming cube) equipped with the metric
    \begin{equation*}
        d(x,y):=|\{ j:x(j)\neq y(j)\}|\qquad\text{for }x,y\in\mathbb I^n.
    \end{equation*}
    \item For $x\in\mathbb I^n$, we define its length as $|x|:=\sum_jx(j)$.
    \item For every $k\in\{0,1,...,n\}$ we define the spherical mean $S_k$ by
    \begin{equation*}
        S_kf(x):=\frac 1{\binom nk} \sum_{\begin{smallmatrix}
            y\in\mathbb I^n\\
            d(x,y)=k
        \end{smallmatrix}} f(y)\qquad\text{for }f:\mathbb I^n\to\mathbb C\text{ and }x\in\mathbb I^n.
    \end{equation*}
    \item Let $\widetilde{\chi_y}(x):=\frac 1{\sqrt{2^n}}(-1)^{x\cdot y}$ for $x,y\in\mathbb I^n$, where $\cdot$ denotes the standard inner product on $\mathbb R^n$. The system $(\widetilde{\chi_y}:y\in\mathbb I^n)$ forms a linear (and orthonormal) basis for $\ell^2(\mathbb I^n)$. Moreover, for every $f:\mathbb I^n\to\mathbb C$, we have
    \begin{equation}\label{eq41}
        f=\sum_{y\in\mathbb I^n}\widehat f(y)\widetilde{\chi_y}.
    \end{equation}
    \item Let $r\in [1,\infty)$. For any $Z\subseteq\{0,1,...,n\},\;\; f:\mathbb I^n\to\mathbb C$, and $x\in\mathbb I^n$, we define the $r$-variation seminorm of the mapping $Z\ni k\mapsto S_kf(x)$ by
    \begin{equation*}
        V_r(S_kf(x):k\in Z):=\max_{\begin{smallmatrix}
            k_0<k_1<...<k_J\\
            k_j\in Z
        \end{smallmatrix}} \Bigg(\sum_{j=1}^J |S_{k_{j-1}}f(x)-S_{k_j}f(x)|^r\Bigg)^{\frac 1r}.
    \end{equation*}
\end{itemize}

We now present the main results of this paper. It turns out that the quantity
\begin{equation}\label{eq39}
    \sup_{f\in\mathbb C^{\mathbb I^n}\setminus\{ 0\}}\frac {\big\| V_r(S_kf:k\in\{0,1,...,n\})\big\|_{\ell^2(\mathbb I^n)}}{\big\|f\big\|_{\ell^2(\mathbb I^n)}}
\end{equation}
admits no dimension-free bounds for any $r\geqslant 1$. Equivalently, we establish the following theorem:

\begin{theorem}
    For every $r\in [1,\infty)$, we have
    \begin{equation*}
        \sup_n \sup_{f\in\mathbb C^{\mathbb I^n}\setminus\{ 0\}}\frac {\big\| V_r(S_kf:k\in\{0,1,...,n\})\big\|_{\ell^2(\mathbb I^n)}}{\big\|f\big\|_{\ell^2(\mathbb I^n)}} =\infty.
    \end{equation*}
\end{theorem}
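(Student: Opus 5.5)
The plan is to exhibit an explicit test function for which the variation seminorm grows like a power of $n$, which is enough since the theorem only asks for unboundedness in $n$. The natural candidate comes from the spectral picture. Each $S_k$ is a radial convolution operator on $\mathbb I^n$, so it is diagonalised by the characters $\widetilde{\chi_y}$: the eigenvalue on $\widetilde{\chi_y}$ is a normalised Krawtchouk polynomial in $|y|$. At the extreme character $y=(1,\dots,1)$ this eigenvalue is $(-1)^k$. The means $S_kf$ then flip sign at every single step in $k$, so the variation over all radii should be as large as possible. This also explains the heuristic that restricting the radii to a fixed parity, as in the second result announced in the abstract, kills exactly this obstruction.

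Concretely, I take $f(x):=(-1)^{|x|}$, which is $\sqrt{2^n}\,\widetilde{\chi_{(1,\dots,1)}}$, and compute $S_kf$ directly rather than through Krawtchouk polynomials. If $d(x,y)=k$, then $y$ differs from $x$ in exactly $k$ coordinates. Hence $|y|\equiv |x|+k \pmod 2$, and so $(-1)^{|y|}=(-1)^{|x|}(-1)^k$ for every $y$ in the sphere of radius $k$ around $x$. Averaging over that sphere gives
\begin{equation*}
    S_kf(x)=(-1)^k f(x)\qquad\text{for all }k\in\{0,1,\dots,n\},\ x\in\mathbb I^n .
\end{equation*}

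Next I bound the variation from below by choosing the full partition $k_j=j$ for $j=0,\dots,n$ in the definition of $V_r$. Each increment satisfies $|S_{j-1}f(x)-S_jf(x)|=2|f(x)|=2$, so
\begin{equation*}
    V_r(S_kf(x):k\in\{0,\dots,n\})\geqslant \big(n\,2^r\big)^{1/r}=2n^{1/r}
\end{equation*}
pointwise. In fact this is an equality, since no increment can exceed $2$ and the full partition uses every available step. Because $|f|\equiv1$, taking $\ell^2(\mathbb I^n)$ norms gives a ratio in \eqref{eq39} of at least $2n^{1/r}$. This tends to $\infty$ as $n\to\infty$ for every fixed $r\in[1,\infty)$, which proves the theorem.

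There is no genuine obstacle. The only point requiring care is the parity identity for $S_kf$, which is elementary. The real content is recognising that the top character is an eigenfunction with alternating eigenvalues $(-1)^k$. Note that the growth $n^{1/r}$ disappears only at $r=\infty$, consistent with the dimension-free maximal bounds of Harrow, Kolla and Schulman cited above.
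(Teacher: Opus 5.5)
Your proof is correct and is essentially the paper's argument (Counterexample~\ref{thm:kontrprz_1}). You use the same test function $\chi_{\mathbf 1_n}$, the same full partition $k_j=j$, and the same pointwise bound $2n^{1/r}$; the only cosmetic difference is that you get $S_k\chi_{\mathbf 1_n}=(-1)^k\chi_{\mathbf 1_n}$ by a direct parity count rather than via the Krawtchouk symmetry $\kappa^{(n)}_k(n)=(-1)^k\kappa^{(n)}_k(0)$ from Fact~\ref{thm:podst_wlasn_krawtch}.
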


This theorem is a direct consequence of Counterexample \ref{thm:kontrprz_1}. This counterexample essentially demonstrates that for every $r\geqslant 1$ and every dimension $n$, we have
\begin{equation*}
    \big\|V_r(S_k\widetilde{\chi_{\mathbf 1_n}}:k\in\{0,1,...,n\})\big\|_{\ell^2(\mathbb I^n)}\geqslant 2n^{\frac 1r} \big\|\widetilde{\chi_{\mathbf 1_n}}\big\|_{\ell^2(\mathbb I^n)},
\end{equation*}
where $\mathbf 1_n$ denotes the element of $\mathbb I^n$ consisting entirely of ones.

Let us note that $\mathbf 1_n$ is the element of $\mathbb I^n$ with the maximal possible length. A natural question arises: could the failure of dimension-free estimates be attributed to the influence of functions $\widetilde{\chi_y}$ for large $|y|$? One might expect that restricting our attention to functions $f$ with $\widehat f(y)=0$ for large $|y|$ (see \eqref{eq41}) would yield dimension-free bounds. In other words, one might ask whether \eqref{eq40} holds for certain specific subsets $E_n\subseteq\mathbb I^n$ (roughly of the form $E_n=\{y\in\mathbb I^n:|y|\text{ is large}\}$). It turns out, however, that merely excluding the influence of $\widetilde{\chi_{\mathbf 1_n}}$, or even the functions $\widetilde{\chi_y}$ for $|y|\geqslant n-b_n$ (where $(b_n)_{n=1}^\infty$ is a sequence of positive real numbers such that $\big(\frac n{b_n}\big)_{n=1}^\infty$ is unbounded, e.g., $b_n:=n^\alpha$ with $\alpha\in (0,1)$), is insufficient. Specifically, by setting $E_n:=\{y\in\mathbb I^n:|y|\geqslant n-b_n\}$, we show that \eqref{eq40} remains false for all $r\geqslant 1$. This is formalized in Corollary \ref{thm:wn_z_kontrprz_2}. Conversely, positive results can be obtained under a more restrictive truncation:

\begin{proposition}
    Let $E_n:=\{y\in\mathbb I^n:|y|>\frac n2\}$ for $n\in\{ 1,2,3,...\}$. Then for every $r\in (2,\infty)$, we have
    \begin{equation}\label{eq40}
        \sup_n \sup_{\begin{smallmatrix}
            f\in\mathbb C^{\mathbb I^n}\setminus\{ 0\}\\
            \widehat{\,f\,}\upharpoonright_{E_n}\equiv\, 0
        \end{smallmatrix}}\frac {\big\| V_r(S_kf:k\in\{0,1,...,n\})\big\|_{\ell^2(\mathbb I^n)}}{\big\|f\big\|_{\ell^2(\mathbb I^n)}} <\infty.
    \end{equation}
\end{proposition}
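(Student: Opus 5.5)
The plan is to work on the Fourier side, where every spherical mean is diagonal. Each $\widetilde{\chi_y}$ is an eigenfunction of $S_k$: $S_k\widetilde{\chi_y}=\kappa_k(|y|)\widetilde{\chi_y}$, where $\kappa_k(j):=K_k(j)/\binom nk$ is the normalized Krawtchouk polynomial. I would compare $S_k$ with the noise semigroup $T_t\widetilde{\chi_y}:=t^{|y|}\widetilde{\chi_y}$ evaluated at $t_k:=1-\frac{2k}{n}$. This is natural because $\kappa_k(j)\approx(1-\frac{2k}{n})^j$ is the heuristic for $k,j$ not too large relative to $n$.

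\emph{Step 1: reduction to $k\leqslant n/2$.} The sphere of radius $n-k$ around $x$ is the sphere of radius $k$ around $x+\mathbf 1_n$ (addition mod $2$). Hence $S_{n-k}f(x)=S_kf(x+\mathbf 1_n)$. Split the variation over $\{0,\dots,n\}$ into three pieces: the variation over $k\leqslant n/2$, the variation over $k\geqslant n/2$, and at most one jump crossing $n/2$. The jump is bounded by $2\sup_k|S_kf|$, which is controlled dimension-free in $\ell^2$ by the maximal theorem of Harrow, Kolla and Schulman \cite{HKS}. The piece over $k\geqslant n/2$ equals the piece over $k\leqslant n/2$ composed with the translation $x\mapsto x+\mathbf 1_n$, which is an $\ell^2$ isometry. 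So it suffices to bound $\|V_r(S_kf:0\leqslant k\leqslant n/2)\|_{\ell^2}$ for $f$ with $\widehat f$ supported on $\{|y|\leqslant n/2\}$.

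\emph{Step 2: the semigroup part.} For $k\leqslant n/2$ we have $t_k\in[0,1]$. The family $(T_t)_{t\in[0,1]}$ is, after the reparametrization $t=e^{-s}$, a symmetric Markov diffusion semigroup on $\mathbb I^n$. For such semigroups the $\ell^2$ bound for $V_r$ with $r>2$ holds with a constant depending only on $r$; this is the Jones--Reinhold / Le Merdy--Xu variational inequality, and on $\ell^2$ it can also be obtained directly by the spectral theorem. Consequently
\begin{equation*}
\|V_r(T_{t_k}f:k\leqslant n/2)\|_{\ell^2}\leqslant C_r\|f\|_{\ell^2},
\end{equation*}
uniformly in $n$. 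This is the only place where $r>2$ is used.

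\emph{Step 3: the error term.} By the triangle inequality for $V_r$ and the crude bound $V_r(a_k)\leqslant 2(\sum_k|a_k|^2)^{1/2}$, together with Plancherel, it remains to prove
\begin{equation*}
\sup_n\ \sup_{0\leqslant j\leqslant n/2}\ \sum_{0\leqslant k\leqslant n/2}\Big|\kappa_k(j)-\Big(1-\frac{2k}{n}\Big)^j\Big|^2<\infty .
\end{equation*}
This Krawtchouk estimate is the main obstacle, and I would attack it as follows.
\begin{itemize}
\item \emph{Symmetrize.} Use $\kappa_k(j)=\kappa_j(k)$, so that the roles of degree and variable can be exchanged, whichever is more convenient.
\item \emph{Small $jk/n$.} Here I would use the generating function $\sum_k K_k(j)z^k=(1-z)^j(1+z)^{n-j}$, or equivalently the representation of $\kappa_k(j)$ as the expectation of $(-1)^{\#}$ over a random $k$-subset of $n$ coordinates. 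This gives $\kappa_k(j)=(1-\frac{2k}{n})^j+O(\frac{jk}{n^2}e^{-cjk/n})$-type bounds. The squared errors then sum in $k$ to $O(1)$, uniformly in $j\leqslant n/2$.
\item \emph{Large $jk/n$.} Here both terms are small and I would bound them separately. For $(1-\frac{2k}{n})^j\leqslant e^{-2jk/n}$, the sum over $k$ is a geometric series of size $O(n/j)$ times $e^{-cjk/n}$ decay, which is harmless. For $\kappa_k(j)$ I would use the known uniform exponential decay $|\kappa_k(j)|\leqslant e^{-cjk/n}$, valid for $j,k\leqslant n/2$ (as in \cite{HKS,Krause}).
\end{itemize}
The restriction $j\leqslant n/2$ is exactly what prevents the near-$\pm1$ behaviour of $\kappa_k(j)$ for $j$ close to $n$. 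That behaviour is the one exploited in Counterexample~\ref{thm:kontrprz_1} and Corollary~\ref{thm:wn_z_kontrprz_2}.

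The delicate point in this step is checking that the sum of squared errors, rather than merely the supremum of the error, is uniformly bounded for $j$ of intermediate size (around $\sqrt n$). If the pure $\ell^2$-sum bound fails in that range, I would fall back on a short/long variation decomposition. The long jumps would go along a lacunary sequence of $k$ adapted to $n/j$, and the short variation would be controlled by the square function of increments $\kappa_{k+1}(j)-\kappa_k(j)$.
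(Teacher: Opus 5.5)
\textbf{There is a genuine gap in Step 3.} Steps 1 and 2 are sound. For the crossing jump you do not need the Harrow--Kolla--Schulman maximal theorem: the paper inserts $S_{\lfloor n/2\rfloor}f$ and $S_{\lfloor n/2\rfloor+1}f$ and uses only $\|S_kf\|_2\leqslant\|f\|_2$. Your aside that the semigroup bound also follows from the spectral theorem alone is not right, since $V_r$ is taken pointwise in $x$ and the Markov property is what Jones--Reinhold need. Your citation covers it anyway.

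The problem is that the uniform bound
\begin{equation*}
\sup_n\ \sup_{1\leqslant j\leqslant n/2}\ \sum_{0\leqslant k\leqslant n/2}\Big|\kappa^{(n)}_k(j)-\Big(1-\frac{2k}{n}\Big)^j\Big|^2<\infty
\end{equation*}
carries the whole difficulty, and your sketch does not establish it.
\begin{itemize}
\item In your ``large $jk/n$'' range, the available bounds $|\kappa^{(n)}_k(j)|\leqslant e^{-cjk/n}$ and $(1-\frac{2k}{n})^j\leqslant e^{-2jk/n}$ give $\sum_{k\geqslant n/j}e^{-2cjk/n}\approx\frac{n}{2cj}$. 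This is unbounded for $j\ll n$ (already for $j\approx\sqrt n$), so it is not harmless, and a sharper $c$ does not help.
\item To make the crude range summable it must start only at $jk/n\approx C\log(n/j)$. The ``small'' range then contains about $\frac{n}{j}\log\frac{n}{j}$ values of $k$. There you need a pointwise error of size $o(\sqrt{j/n})$ with exponential decay in $jk/n$.
\item The generating function does not give this cheaply. The expansion $\kappa^{(n)}_k(j)=\sum_i\binom ji(-2)^i\frac{k(k-1)\cdots(k-i+1)}{n(n-1)\cdots(n-i+1)}$ loses roughly a factor $e^{4jk/n}$ to cancellation once $jk/n\gg 1$.
\item Your claimed error $O(\frac{jk}{n^2}e^{-cjk/n})$ is too small by a factor of $j$. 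Write $(1-2u)^j$ as the binomial average of $\kappa^{(n)}_K(j)$ over $K\sim\mathrm{Bin}(n,u)$ (Lemma~\ref{thm:inny_wzor_na_Nt}), and take second differences in $K$ (Lemma~\ref{thm:roznica_krawtch} twice). The error comes out to about $-\frac{2j(j-1)k(n-k)}{n^2(n-1)}(1-\frac{2k}{n})^{j-2}$, which is exact for $j=2$ and of order $j/n$ when $jk/n\approx 1$.
\end{itemize}
The estimate may well be true. Proving it, however, requires uniform asymptotics of Krawtchouk polynomials in every regime. This includes $k$ comparable to $n$ with $j\lesssim\log n$, where $e^{-cjk/n}$ summed over roughly $n$ terms is again useless.

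\textbf{Your fallback is the paper's route, and it removes the need for fine asymptotics, but it has to be set up correctly.}
\begin{itemize}
\item Via \eqref{eq08}, the paper compares $S_k$ with $N_{k/n}$ only along the fixed dyadic sequence $k=2^l$. The sequence cannot be ``adapted to $n/j$'', because $j=|y|$ ranges over the Fourier support of $f$; $n/j$ may enter only through the multiplier bounds.
\item Along $k=2^l$, Lemma~\ref{thm:szac_dla_krawtch}(a),(b) bound the error by $\min\{2^lj/n,\,n/(2^lj)\}$. This is square-summable in $l$ by \eqref{eq32}, so even the crude comparison with $e^{-kj/n}$ suffices.
\item The short variations over $[2^l,2^{l+1})$ are handled by the multiscale Lemma~\ref{thm:usuniecie_poln_war}. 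One interpolates \eqref{eq37} with the increment bound $|\kappa^{(n)}_k-\kappa^{(n)}_{k-1}|\lesssim 1/k$ of Lemma~\ref{thm:szac_dla_krawtch}(c).
\item A plain square function of consecutive increments does not dominate $V_2$ on a block; a linear ramp is a counterexample. You need either this decomposition, or at least Cauchy--Schwarz with the block-length factor $2^l$. In the latter case you also need decay of $\kappa^{(n)}_{k+1}(j)-\kappa^{(n)}_k(j)=-\frac{2j}{n}\kappa^{(n-1)}_{j-1}(k)$ in $2^lj/n$, e.g.\ from Lemma~\ref{thm:kappa_jak_e}.
\end{itemize}
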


\noindent This proposition will be restated later as Proposition \ref{thm:stw_1}.

The failure of dimension-free bounds for \eqref{eq39} is closely tied to the fact that ${\big|(-1)^{k_1}-(-1)^{k_2}\big|}$ $=2$ for indices $k_1,k_2\in\{0,1,...,n\}$ of different parities. We demonstrate that restricting the variation to radii of a fixed parity restores the dimension-free estimates. More precisely, we establish the following:

\begin{theorem}
    For every $r\in (2,\infty)$ and $q\in\{0,1\}$, we have
    \begin{equation*}
        \sup_n \sup_{f\in\mathbb C^{\mathbb I^n}\setminus\{ 0\}}\frac {\big\| V_r(S_kf:k\in (2\mathbb Z+q)\cap\{0,1,...,n\})\big\|_{\ell^2(\mathbb I^n)}}{\big\|f\big\|_{\ell^2(\mathbb I^n)}} <\infty.
    \end{equation*}
\end{theorem}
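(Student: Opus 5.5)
The approach is to diagonalize the spherical means on the characters $\widetilde{\chi_y}$ and reduce the parity-restricted variation to a dimension-free variational estimate for a semigroup, plus a square-function error term. Since $S_k\widetilde{\chi_y}=\kappa_k(|y|)\widetilde{\chi_y}$, where $\kappa_k(m)=K_k(m)/\binom nk$ is the normalized Krawtchouk polynomial, the operators $S_k$ are Fourier multipliers. We use the symmetry $\kappa_k(n-m)=(-1)^k\kappa_k(m)$, which follows from $\widetilde{\chi_{\mathbf 1_n+y}}=\widetilde{\chi_{\mathbf 1_n}}\widetilde{\chi_y}$ up to normalization. On each fixed-parity class, $k\mapsto S_kf$ restricted to the characters with $|y|>n/2$ therefore equals $\pm S_k$ applied to $g:=\widetilde{\chi_{\mathbf 1_n}}\cdot(\text{the part with }|y|>n/2)$, a function supported on $|y|<n/2$, with a sign that is constant in $k$. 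Multiplication by $\widetilde{\chi_{\mathbf 1_n}}$ is an isometry up to a constant. So the Theorem follows from the Proposition (dimension-free bounds for $\widehat f$ vanishing on $E_n$), applied to both halves, together with a separate treatment of the single level $|y|=n/2$, which is its own mirror image.

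Granting the Proposition is not yet proved at this point, I would prove it as follows. Compare $S_k$ on low frequencies $m=|y|\le n/2$ with the noise semigroup $T_t\widetilde{\chi_y}=e^{-t|y|}\widetilde{\chi_y}$, or more concretely with the multiplier $(1-2k/n)^m$, the natural "Gaussian" approximation of $\kappa_k(m)$. Bound
\[
V_r(S_kf)\le V_r(P_kf)+\Big(\sum_k|(S_k-P_k)f|^2\Big)^{1/2},
\qquad P_k\widetilde{\chi_y}=(1-2k/n)^{|y|}\widetilde{\chi_y}.
\]
For $k\le n/2$, $P_k$ is the noise semigroup at time $t=-\log(1-2k/n)$, a symmetric Markov diffusion semigroup. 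For such semigroups Stein/Le Merdy--Xu type dimension-free $\ell^2$ bounds hold for $V_r$ with $r>2$; on $\ell^2$ this is simply the spectral theorem applied to $e^{-t\lambda}$. For $k>n/2$ the base $1-2k/n$ is negative, which is exactly why parity must be fixed: along even $k$ (or odd $k$, up to the global sign $(-1)^m$ absorbed into a character shift) the map $k\mapsto(1-2k/n)^m$ is again monotone in $|1-2k/n|$. The range $k>n/2$ is then mapped to $k'=n-k$ by the same symmetry, and the variation splits into two monotone pieces.

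The main obstacle is the square-function error. By Plancherel it reduces to the uniform bound
\[
\sup_n\sup_{0\le m\le n/2}\sum_{k}\big|\kappa_k(m)-(1-2k/n)^m\big|^2<\infty ,
\]
taken over $k$ of fixed parity or over all $k$. I would attack this with the generating function $\sum_k K_k(m)z^k=(1-z)^m(1+z)^{n-m}$, writing $\kappa_k(m)$ as the expectation $\mathbb E\big[(-1)^{X}\big]$ with $X$ hypergeometric (the number of flipped coordinates among the $m$ active ones) and comparing it with the binomial model $\mathbb E\big[(-1)^{\mathrm{Bin}(m,k/n)}\big]=(1-2k/n)^m$. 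The difference is controlled by the sampling-without-replacement correction, which is of size about $m k(n-k)/n^2\cdot(\text{decay})$. This must be summed over $k$ using that both quantities are exponentially small when $mk/n\gg1$. The restriction $m\le n/2$ keeps $\kappa_k(m)$ from oscillating near $k=n$ (where $|\kappa_k|$ could approach $1$ for large $m$). If the direct estimate is too crude, I would instead bound $\sum_k|\kappa_k(m)-\kappa_{k+2}(m)|^2$ and use a Rademacher--Menshov or Lépingle-type argument, applied to long jumps along dyadic $k$ (compared with the semigroup) and short jumps within dyadic blocks (summed via the square function).
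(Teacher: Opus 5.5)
There is a genuine gap: your main route rests on a uniform Krawtchouk estimate that you leave unproved, and the heuristic you offer for it is too large by a factor of about $n/m$. The estimate turns out to be true, and once it is proved your route gives a shorter proof than the paper's.

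\textbf{What agrees with the paper.} Your reductions match the paper's. Splitting $f$ at $|y|=n/2$ and multiplying the high part by $\chi_{\mathbf 1_n}$ is exactly the operator $\theta$ and identity \eqref{eq14}. The range $k>n/2$ is handled, as in the paper, by $S_kf(x\oplus\mathbf 1_n)=S_{n-k}f(x)$. Three small corrections:
\begin{itemize}
\item The level $|y|=n/2$ needs no separate treatment, since Proposition \ref{thm:stw_1} already allows it.
\item Parity plays no role inside that proposition; it only enters the reduction from the Theorem.
\item The semigroup bound is not ``simply the spectral theorem''. Spectral calculus gives square functions, which do not control long jumps. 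You need the Jones--Reinhold/Le Merdy--Xu inequality for symmetric diffusion semigroups, which the paper applies via Lemma \ref{thm:Nt_sym_polgr_dyfuzji}.
\end{itemize}

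\textbf{The gap.} Your main route rests on
\[
\sup_n\sup_{0\le m\le n/2}\sum_{1\le k\le n/2}\Big|\kappa^{(n)}_k(m)-\big(1-\tfrac{2k}{n}\big)^m\Big|^2<\infty ,
\]
which you leave open, and your sketch would not prove it. The first-order bounds available (Lemma \ref{thm:szac_dla_krawtch}(a),(b) and Lemma \ref{thm:kappa_jak_e}) only give $|\kappa^{(n)}_k(m)-(1-\frac{2k}{n})^m|\lesssim\min\{\frac{km}{n},e^{-ckm/n}\}$. The squares of these sum to about $n/m$, which is unbounded when $m=o(n)$. Your proposed error size $mk(n-k)/n^2$ is of order $1$ at $k\approx n/m$, so it yields the same divergent $n/m$. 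What is needed is a second-order cancellation of order $km^2/n^2$. For example, for $m=2$ the exact difference is $\frac{4k(n-k)}{n^2(n-1)}$, about $n$ times smaller than your $\frac{2k(n-k)}{n^2}$.

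Your fallback is precisely the paper's proof, which is built to avoid this estimate:
\begin{itemize}
\item It compares $S_k$ with $N_{k/n}$ only for $k\in\mathbb D$, where the crude bounds suffice by lacunarity \eqref{eq32}.
\item It controls the variation inside $[2^l,2^{l+1})$ by Lemma \ref{thm:usuniecie_poln_war}, using the increment bound of Lemma \ref{thm:szac_dla_krawtch}(c) interpolated with the $\min$-bound.
\end{itemize}
You only name this fallback; you do not carry it out.

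\textbf{How to close the gap.} Let $U_1,\dots,U_m$ be independent and uniform on $\{1,\dots,n\}$, and let $J_m$ be the number of points hit an odd number of times. By exchangeability and $\kappa^{(n)}_k(j)=\kappa^{(n)}_j(k)$, one has exactly
\[
\big(1-\tfrac{2k}{n}\big)^m=\mathbb E\,\kappa^{(n)}_k(J_m).
\]
\begin{itemize}
\item Each repeated draw raises $m-J_m$ by at most $2$, so $\mathbb E(m-J_m)\le m^2/n$.
\item Lemma \ref{thm:roznica_krawtch} then gives $|\kappa^{(n)}_k(m)-(1-\frac{2k}{n})^m|\le\frac{2k}{n}\,\mathbb E(m-J_m)\le\frac{2km^2}{n^2}$.
\item Use this bound for $k\le\frac{n}{cm}\log\frac nm$, and the bound $2e^{-ckm/n}$ from Lemma \ref{thm:kappa_jak_e} for larger $k$.
\item The sum is then $\lesssim\frac mn\log^3\frac nm+\frac mn\lesssim1$.
\end{itemize}
With this in hand, $V_r(S_kf)\le V_r(P_kf)+2\big(\sum_k|(S_k-P_k)f|^2\big)^{1/2}$ and Plancherel prove Proposition \ref{thm:stw_2}. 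This needs no dyadic splitting \eqref{eq08} and no short-variation analysis. The price is that you must compare with the exact binomial multiplier $(1-\frac{2k}{n})^m$. With $e^{-2km/n}$, or the paper's $e^{-km/n}$, the sum over all $k$ is already unbounded for $m=1$.
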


\noindent This theorem constitutes the main result of the paper and will be restated later as Theorem~\ref{thm:glowne_tw}.

The proofs presented herein are partially based on the techniques developed in \cite{BMSW}.

\section{Preliminaries and notation}

\begin{itemize}
\item
Throughout the paper the number $n\in\{1,2,3,...\}$ (which can be thought to be large) will denote the dimension.
\item
For two integers $k,l$ satisfying $k\leqslant l$ we use the notation
\begin{equation*}
    \{k,...,l\}:=\{m\in\mathbb Z:k\leqslant m\leqslant l\}.
\end{equation*}
\item
For two real-valued expressions $\varphi$ and $\psi$ we write
\begin{equation*}
    \varphi\lesssim\psi
\end{equation*}
meaning that
\begin{equation*}
    \varphi\leqslant C\psi
\end{equation*}
for some universal constant $C>0$.
\item
Similarly, for a parameter $a$ we write
\begin{equation*}
    \varphi\lesssim_a\psi
\end{equation*}
meaning that
\begin{equation*}
    \varphi\leqslant C_a\psi
\end{equation*}
for some constant $C_a>0$ dependent only on the parameter $a$.
\end{itemize}

\subsection{The Hamming cube, the spherical means, and the corresponding Hilbert space}

\begin{itemize}
\item
Let $\mathbb I$ denote the group $\{0,1\}$ with addition modulo 2. The Hamming cube is the group $\mathbb I^n$ with the product action, which we call addition and denote by $\oplus$. Let us note that in $\mathbb I^n$ subtraction ($\ominus$) is the same as addition.
\item
For a subset $A\subseteq\mathbb I^n$ we denote the characteristic function of $A$ by $\mathds 1_A$.
\item
We define the convolution $f*g$ of functions $f,g:\mathbb I^n\to\mathbb C$ by
\begin{equation*}
    f*g\: (x):=\sum_{y\in\mathbb I^n}f(x\ominus y)g(y)=\sum_{y\in\mathbb I^n}f(x\oplus y)g(y)\qquad\text{for }x\in\mathbb I^n.
\end{equation*}
\item
For $x\in\mathbb I^n$ we write
\begin{equation*}
    x=\Big(x(1),...,x(n)\Big)
\end{equation*}
and define the length $|x|$ of $x$ by
\begin{equation*}
    |x|:=\sum_{j=1}^nx(j),
\end{equation*}
where $\sum$ denotes the usual sum in $\mathbb C$.
\item
For $k\in\{0,...,n\}$ we define the sphere $R_k$ of radius $k$ by
\begin{equation*}
    R_k:=\{x\in\mathbb I^n:|x|=k\}
\end{equation*}
and the spherical mean $S_k$ by
\begin{equation*}
    S_kf:=f*\tfrac 1{|R_k|}\mathds 1_{R_k}\qquad\text{for }f:\mathbb I^n\to\mathbb C.
\end{equation*}
\item
On the linear space $\mathbb C^{\mathbb I^n}$ we define the inner product and the second norm:
\begin{equation*}
    \begin{split}
        &\langle f,g\rangle:=\sum_{x\in\mathbb I^n}f(x)\overline{g(x)}\qquad\text{for } f,g:\mathbb I^n\to\mathbb C,\\
        &\|f\|_2:=\sqrt{\langle f,f\rangle}\qquad\qquad\:\,\text{for } f:\mathbb I^n\to\mathbb C.
    \end{split}
\end{equation*}
The Hilbert space $\ell^2(\mathbb I^n)$ is the linear space $\mathbb C^{\mathbb I^n}$ equipped with the inner product $\langle\cdot,\cdot\rangle$.
\item
We also define the $p$-th norms on $\mathbb C^{\mathbb I^n}$:
\begin{equation*}
    \begin{split}
        &\|f\|_p:=\left(\sum_{x\in\mathbb I^n}|f(x)|^p\right)^{\frac 1p} \qquad\text{for }f:\mathbb I^n\to\mathbb C\text{ and }p\in[1,\infty),\\
        &\|f\|_\infty:=\max_{x\in\mathbb I^n}|f(x)| \qquad\quad\;\;\;\;\text{for }f:\mathbb I^n\to\mathbb C.
    \end{split}
\end{equation*}
\end{itemize}

We have the following lemma relating to the spherical means $S_k$. Its proof is standard so we omit it.

\begin{lemma}\label{thm:Sk_kontrakcja}
    For any $p\in[1,\infty],\;\; k\in\{0,...,n\}$, and $f:\mathbb I^n\to\mathbb C$ we have
    \begin{equation*}
        \|S_kf\|_p\leqslant\|f\|_p.
    \end{equation*}
\end{lemma}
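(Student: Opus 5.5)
The statement is Lemma \ref{thm:Sk_kontrakcja}, and I would prove it by Young's inequality for convolutions on the finite group $\mathbb I^n$. The starting point is the definition $S_kf=f*\mu_k$, where $\mu_k:=\tfrac 1{|R_k|}\mathds 1_{R_k}$. The kernel $\mu_k$ is nonnegative and $\sum_{y}\mu_k(y)=1$, so $\mu_k$ is a probability measure on $\mathbb I^n$. It therefore suffices to show that convolution with any probability measure $\mu$ is a contraction on every $\ell^p(\mathbb I^n)$.

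The case $p=\infty$ is immediate: for each $x$,
\begin{equation*}
|f*\mu(x)|\leqslant\sum_y|f(x\oplus y)|\mu(y)\leqslant\|f\|_\infty .
\end{equation*}
For $p\in[1,\infty)$, I would apply Jensen's (equivalently Hölder's) inequality with respect to $\mu$, which gives
\begin{equation*}
|f*\mu(x)|^p\leqslant\sum_y|f(x\oplus y)|^p\mu(y).
\end{equation*}
Summing over $x\in\mathbb I^n$ and exchanging the order of summation, I use that $x\mapsto x\oplus y$ is a bijection of $\mathbb I^n$ for each fixed $y$, so $\sum_x|f(x\oplus y)|^p=\|f\|_p^p$. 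This yields $\|f*\mu\|_p^p\leqslant\|f\|_p^p\sum_y\mu(y)=\|f\|_p^p$.

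No step is a real obstacle. The only points to check are the normalization $\sum_y\mu_k(y)=|R_k|/|R_k|=1$ and the translation invariance of counting measure on the group, both of which are immediate. As an alternative route, one could prove the endpoint cases $p=1$ and $p=\infty$ as above and then apply Riesz--Thorin interpolation, but the direct Jensen argument avoids any interpolation.
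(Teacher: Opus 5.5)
Your proof is correct, and it is essentially the standard argument the paper omits: one writes $S_kf=\frac{1}{|R_k|}\sum_{y\in R_k}f(\cdot\oplus y)$ as an average of translates, each of which has $\ell^p$ norm $\|f\|_p$, and applies Minkowski's inequality. Your pointwise Jensen step followed by summation over $x$ is the same convexity-plus-translation-invariance idea, applied to $|\cdot|^p$ rather than to the norm, so nothing is missing.
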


% \begin{proof}
%     Let us denote $F_y(x):=f(x\ominus y)$ for $x,y\in\mathbb I^n$. We have
%     \begin{equation*}
%         \|S_kf\|_p=\left\| \frac 1{|R_k|}\sum_{y\in R_k}F_y \right\|_p\leqslant\frac 1{|R_k|}\sum_{y\in R_k}\|F_y\|_p=\frac 1{|R_k|}\sum_{y\in R_k}\|f\|_p=\|f\|_p.
%     \end{equation*}
% \end{proof}

\subsection{Orthogonal systems and the Fourier transform}
\begin{itemize}
\item
Let $\mathbb T$ be the group $\{z\in\mathbb C:|z|=1\}$ with multiplication and let
\begin{equation*}
    \widehat{\mathbb I^n}:=\left\{ f\! :\!\mathbb I^n\!\to\!\mathbb T\; :\;f\text{ is a group homomorphism} \right\}
\end{equation*}
be the dual group of $\mathbb I^n$.
\item
For $\chi\in\widehat{\mathbb I^n}$ we define its normalization $\widetilde\chi$ by
\begin{equation*}
    \widetilde\chi:=\frac 1{\|\chi\|_2}\chi=\frac 1{\sqrt{2^n}}\chi.
\end{equation*}
\item
We have the group isomorphism $\mathbb I^n\ni y\mapsto\chi_y\in\widehat{\mathbb I^n}$ defined by
\begin{equation*}
    \chi_y(x):=(-1)^{x\cdot y}\qquad\text{for }x,y\in\mathbb I^n,
\end{equation*}
where $x\cdot y=\sum_{j=1}^nx(j)y(j)\;$ ($\cdot$ is the standard inner product on $\mathbb R^n$).\\
The system $(\chi_y:y\in\mathbb I^n)$ is a linear basis and an orthogonal system in $\ell^2(\mathbb I^n)$. The system $(\widetilde{\chi_y}:y\in\mathbb I^n)$ is an orthonormal basis of $\ell^2(\mathbb I^n)$.
\item
For $f:\mathbb I^n\to\mathbb C$ we define its Fourier transform $\widehat f$ by
\begin{equation*}
    \widehat f(y):=\frac 1{\sqrt{2^n}}\sum_{x\in\mathbb I^n}f(x)\overline{\chi_y(x)}\qquad\text{for }y\in\mathbb I^n.
\end{equation*}
Let us note that $\widehat f(y)=\langle f,\widetilde{\chi_y}\rangle$ so we have the equality
\begin{equation}\label{eq01}
    f=\sum_{y\in\mathbb I^n}\widehat f(y)\widetilde{\chi_y}
\end{equation}
and the Plancherel's equality
\begin{equation}\label{eq02}
    \|f\|_2^2=\sum_{y\in\mathbb I^n}|\widehat f(y)|^2.
\end{equation}
\end{itemize}

\subsection{The Krawtchouk polynomials and the noise semigroup}
It is easy to show that for $k\in\{0,...,n\}$ and $y\in\mathbb I^n$ we have
\begin{equation*}
    S_k\chi_y=\underbrace{\left( \frac 1{|R_k|}\sum_{w\in R_k}(-1)^{w\cdot y} \right)}_{\qquad\:\;\;\;\, =:\:\mathfrak m_k(y)} \chi_y.
\end{equation*}
A simple calculation gives us that $\mathfrak m_k(y)=\kappa^{(n)}_k(|y|)$, where
\begin{equation}\label{eq04}
    \kappa^{(n)}_k(x):=\frac 1{\binom nk}\sum_{j=0}^k(-1)^j\binom xj\binom{n-x}{k-j}
\end{equation}
is the normalized $k$-th Krawtchouk polynomial. In summary, we have the following formula:
\begin{equation}\label{eq47}
    S_k\chi_y=\kappa^{(n)}_k(|y|)\chi_y\qquad\text{for }k\in\{0,...,n\}\text{ and }y\in\mathbb I^n.
\end{equation}
Now we state some straightforward properties of Krawtchouk polynomials:

\begin{fact}\label{thm:podst_wlasn_krawtch}
    For $k,x\in\{0,...,n\}$ we have
    \begin{enumerate}
        \item $|\kappa^{(n)}_k(x)|\leqslant 1$,
        \item $\kappa^{(n)}_k(0)=1$,
        \item $\kappa^{(n)}_k(x)=\kappa^{(n)}_x(k)$,
        \item $\kappa^{(n)}_k(x)=(-1)^k\kappa^{(n)}_k(n-x)$.
    \end{enumerate}
\end{fact}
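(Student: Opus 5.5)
We verify the four items directly from the definition \eqref{eq04}, using in addition the interpretation $\kappa^{(n)}_k(|y|)=\mathfrak m_k(y)=\frac 1{|R_k|}\sum_{w\in R_k}(-1)^{w\cdot y}$ for $y\in\mathbb I^n$. This identity holds because, for fixed $y$ with $|y|=x$, the number of $w\in R_k$ with exactly $j$ ones inside the support of $y$ is $\binom xj\binom{n-x}{k-j}$, and each such $w$ contributes $(-1)^{w\cdot y}=(-1)^j$. Since every $x\in\{0,\dots,n\}$ equals $|y|$ for some $y\in\mathbb I^n$, it suffices to prove each item for $\mathfrak m_k(y)$.

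Items 1 and 2 are immediate from this representation. $\mathfrak m_k(y)$ is an average of $|R_k|$ numbers of modulus one, so $|\kappa^{(n)}_k(x)|\leqslant 1$. For $x=0$ we take $y=0$, so every term equals $1$; alternatively, in \eqref{eq04} only $j=0$ survives, giving $\binom nk/\binom nk=1$.

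Item 4 follows by taking $y$ with $|y|=x$ and replacing it by $y\oplus\mathbf 1_n$, which has length $n-x$. For $w\in R_k$ we have $w\cdot(y\oplus\mathbf 1_n)=|w|-w\cdot y=k-w\cdot y$, so every term gets multiplied by $(-1)^k$. Hence $\mathfrak m_k(y\oplus\mathbf 1_n)=(-1)^k\mathfrak m_k(y)$.

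Item 3 is the only one needing a short computation. Multiplying \eqref{eq04} by $\binom nk\binom nx$, it suffices to show that
\[
\binom nx\sum_j(-1)^j\binom xj\binom{n-x}{k-j}
\]
is symmetric in $(k,x)$. Count signed pairs $(A,B)$ of subsets of $\{1,\dots,n\}$ with $|A|=x$, $|B|=k$ and weight $(-1)^{|A\cap B|}$. Choosing $A$ first and then $B$ with $|A\cap B|=j$ gives exactly the expression above. Choosing $B$ first gives the same expression with $k$ and $x$ interchanged. The counted quantity is symmetric in the roles of $A$ and $B$, hence $\binom nk\kappa^{(n)}_k(x)\binom nx=\binom nx\kappa^{(n)}_x(k)\binom nk$, which is item 3. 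Equivalently, this double count is the statement $\sum_{w\in R_k}\sum_{y\in R_x}(-1)^{w\cdot y}=|R_x|\,|R_k|\,\kappa^{(n)}_k(x)$, which is visibly symmetric.

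None of these steps is a genuine obstacle. The only point requiring care is to justify the combinatorial identification of $\mathfrak m_k(y)$ with \eqref{eq04}, which is already asserted in the paper.
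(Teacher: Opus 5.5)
Your proof is correct in all four items. The identification $\mathfrak m_k(y)=\kappa^{(n)}_k(|y|)$ gives items 1, 2 and 4 directly; for item 4 you use $w\cdot(y\oplus\mathbf 1_n)=k-w\cdot y$ for $w\in R_k$. The signed double count of pairs $(A,B)$ correctly gives $\binom nx\binom nk\kappa^{(n)}_k(x)=\binom nk\binom nx\kappa^{(n)}_x(k)$, which is item 3.

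The paper gives no proof of this Fact: it calls the properties straightforward right after deriving the character-sum formula \eqref{eq47}. Your verification is the natural argument that remark leaves implicit. The reflection $y\mapsto y\oplus\mathbf 1_n$ you use for item 4 is also the one the paper uses later, in showing that Proposition~\ref{thm:stw_1} implies Theorem~\ref{thm:glowne_tw}.
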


The next two lemmas are contained in \cite{HKS} (see \cite[Lemma 2.2]{HKS} for Lemma \ref{thm:kappa_jak_e} and \cite[computations on pages 65--66]{HKS} for Lemma \ref{thm:roznica_krawtch}).

\begin{lemma}\label{thm:kappa_jak_e}
    There exists a constant $c>0$ independent of anything such that for $k,x\in\{0,...,\lfloor\frac n2\rfloor\}$ we have
    \begin{equation*}
        |\kappa^{(n)}_k(x)|\leqslant e^{-c\frac{kx}n}.
    \end{equation*}
\end{lemma}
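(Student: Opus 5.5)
The plan is to read $\kappa^{(n)}_k(x)$ as a characteristic-type value of a hypergeometric random variable, and then represent that variable as a sum of independent Bernoulli variables. By symmetry of the sphere, \eqref{eq04} says
\begin{equation*}
    \kappa^{(n)}_k(x)=\mathbb E\,(-1)^{H},
\end{equation*}
where $H=|A\cap X|$, $X\subseteq\{1,\dots,n\}$ is a fixed set with $|X|=x$, and $A$ is a uniformly random $k$-element subset. Indeed, $\mathbb P(H=j)=\binom xj\binom{n-x}{k-j}/\binom nk$. So $H$ is hypergeometric with parameters $(n,x,k)$.

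The key input I would use is a classical fact: the hypergeometric law is a Poisson--binomial law. Its probability generating polynomial $P(z)=\sum_j \mathbb P(H=j)z^j$ has only real, nonpositive roots. This follows from the real-rootedness of Jacobi/hypergeometric-type polynomials, or from the Pólya-frequency property of the sequence $\binom xj\binom{n-x}{k-j}$. Hence there are $p_1,\dots,p_m\in[0,1]$, $m=\min(k,x)$, with
\begin{equation*}
    P(z)=\prod_{i=1}^m (1-p_i+p_iz).
\end{equation*}
Evaluating at $z=-1$ gives $\kappa^{(n)}_k(x)=\prod_i(1-2p_i)$.

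Next I use the elementary bound $|1-2p|\leqslant 1-2\min(p,1-p)\leqslant e^{-2\min(p,1-p)}\leqslant e^{-2p(1-p)}$. Combined with $\sum_i p_i(1-p_i)=\mathrm{Var}(H)$, this yields
\begin{equation*}
    |\kappa^{(n)}_k(x)|\leqslant e^{-2\,\mathrm{Var}(H)}.
\end{equation*}
It then remains to bound the hypergeometric variance from below:
\begin{equation*}
    \mathrm{Var}(H)=k\frac xn\Big(1-\frac xn\Big)\frac{n-k}{n-1}.
\end{equation*}
For $k,x\leqslant\lfloor n/2\rfloor$ we have $1-\frac xn\geqslant\frac12$ and $\frac{n-k}{n-1}\geqslant\frac12$, with trivial adjustments for tiny $n$ or for $k=0$ or $x=0$, where the claim is immediate from Fact \ref{thm:podst_wlasn_krawtch}. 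Thus $\mathrm{Var}(H)\geqslant \frac{kx}{4n}$, and the lemma follows with $c=\frac12$.

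The main obstacle is justifying the real-rootedness of the hypergeometric generating polynomial. If I cannot cite it directly, I would prove it by interlacing. The polynomials $z\mapsto\sum_j\binom xj\binom{n-x}{k-j}z^j$ are coefficients of $(1+z w)^x(1+w)^{n-x}$ in $w$. Raising $x$ by one multiplies by a real-rooted factor, and a standard Obreschkoff/Wagner-type argument shows that real-rootedness of these coefficient sequences is preserved. I would prefer this route over a saddle-point contour estimate using the generating function
\begin{equation*}
    \sum_k\binom nk\kappa^{(n)}_k(x)z^k=(1-z)^x(1+z)^{n-x}.
\end{equation*}
The contour estimate loses a factor of order $\sqrt k$, which is fatal in the regime $kx/n=O(1)$ with $k$ small relative to $\sqrt n$.
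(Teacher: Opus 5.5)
Your proof is correct, and it is necessarily a different route: the paper gives no proof of this lemma and imports it from Harrow--Kolla--Schulman (their Lemma~2.2).

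Each of your steps checks out:
\begin{itemize}
\item $\kappa^{(n)}_k(x)=\mathbb E(-1)^H$ is just \eqref{eq04} read as a hypergeometric expectation.
\item Nonpositive real roots $-\rho_i$, together with $P(1)=1$, give the factorization with $p_i=1/(1+\rho_i)\in(0,1]$.
\item $|1-2p|=1-2\min(p,1-p)\leqslant e^{-2p(1-p)}$.
\item $\mathrm{Var}(H)=\sum_i p_i(1-p_i)=k\frac xn\big(1-\frac xn\big)\frac{n-k}{n-1}\geqslant\frac{kx}{4n}$ for $n\geqslant 2$ and $k,x\leqslant\frac n2$. The cases $n=1$, $k=0$ and $x=0$ are trivial, as you say.
\end{itemize}

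What your route buys is an explicit constant $c=\frac12$. In fact you get the stronger inequality $|\kappa^{(n)}_k(x)|\leqslant\exp\big(-2k\frac xn(1-\frac xn)\frac{n-k}{n-1}\big)$ for all $k,x\in\{0,\dots,n\}$. The restriction to $\{0,\dots,\lfloor\frac n2\rfloor\}$ enters only in the final lower bound on the variance.

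The price is that you trade one external input for another: everything rests on the real-rootedness of $P$. Your interlacing sketch is not yet an argument, so this step should be cited or proved properly. The cleanest justification is as follows.
\begin{itemize}
\item $\binom nk P(z)=e_k(z_1,\dots,z_n)$ evaluated at $z_i=z$ for $i\in X$ and $z_i=1$ for $i\notin X$, where $e_k$ is the elementary symmetric polynomial.
\item $e_k$ is real stable, and both diagonalizing variables and specializing variables to real values preserve real stability.
\item Hence $P$ is real-rooted, and since its coefficients are nonnegative, its roots are nonpositive.
\end{itemize}
Equivalently, you may cite the classical fact that the hypergeometric law is Poisson--binomial.

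The closing remark about contour integrals is not needed for the proof.
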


\begin{lemma}\label{thm:roznica_krawtch}
    For $n\in\{2,3,4,...\}$ and $k,x\in\{1,...,n\}$ we have the formula
    \begin{equation*}
        \kappa^{(n)}_k(x)-\kappa^{(n)}_k(x-1)= -2\frac kn \kappa^{(n-1)}_{k-1}(x-1).
    \end{equation*}
\end{lemma}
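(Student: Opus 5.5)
The statement to prove is Lemma \ref{thm:roznica_krawtch}, i.e. the identity
\begin{equation*}
\kappa^{(n)}_k(x)-\kappa^{(n)}_k(x-1)=-2\tfrac kn\,\kappa^{(n-1)}_{k-1}(x-1).
\end{equation*}
I will prove it by a direct combinatorial computation with the explicit formula \eqref{eq04}. Equivalently, I work with the generating function of the unnormalized Krawtchouk polynomials,
\begin{equation*}
K^{(n)}_k(x):=\sum_{j}(-1)^j\binom xj\binom{n-x}{k-j},
\qquad
\sum_{k=0}^n K^{(n)}_k(x)\,t^k=(1-t)^x(1+t)^{n-x}.
\end{equation*}
This identity is immediate from the binomial theorem, since the coefficient of $t^k$ in $(1-t)^x(1+t)^{n-x}$ is exactly $K^{(n)}_k(x)$.

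The key step is to compare the generating functions at $x$ and at $x-1$ for $x\in\{1,\dots,n\}$:
\begin{equation*}
(1-t)^x(1+t)^{n-x}-(1-t)^{x-1}(1+t)^{n-x+1}
=(1-t)^{x-1}(1+t)^{n-x}\big[(1-t)-(1+t)\big].
\end{equation*}
The bracket equals $-2t$. The remaining factor $(1-t)^{x-1}(1+t)^{(n-1)-(x-1)}$ is the generating function for $K^{(n-1)}_m(x-1)$. Comparing coefficients of $t^k$ therefore gives
\begin{equation*}
K^{(n)}_k(x)-K^{(n)}_k(x-1)=-2K^{(n-1)}_{k-1}(x-1),\qquad k\in\{1,\dots,n\}.
\end{equation*}

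Next I pass to the normalized polynomials. Dividing by $\binom nk$ and using $\binom{n-1}{k-1}/\binom nk=\frac kn$ yields
\begin{equation*}
\kappa^{(n)}_k(x)-\kappa^{(n)}_k(x-1)=-2\,\frac{\binom{n-1}{k-1}}{\binom nk}\,\kappa^{(n-1)}_{k-1}(x-1)=-2\frac kn\,\kappa^{(n-1)}_{k-1}(x-1),
\end{equation*}
which is the claim.

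The only points needing care are the ranges of the indices. The hypothesis $n\geqslant 2$ makes $\kappa^{(n-1)}$ well defined with $n-1\geqslant 1$. For $k,x\in\{1,\dots,n\}$ we have $k-1\in\{0,\dots,n-1\}$ and $x-1\in\{0,\dots,n-1\}$, so the exponents $x-1$ and $(n-1)-(x-1)$ are nonnegative. Hence every polynomial identity used is legitimate, and the convention $\binom ab=0$ for $b\notin\{0,\dots,a\}$ matches the coefficient extraction. I expect no real obstacle: the argument is essentially the Pascal-type recursion, and the bookkeeping of the normalization constant $\frac kn$ is the only place where an error could creep in.
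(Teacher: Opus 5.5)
Your proof is correct and complete. The identity $\sum_{k}K^{(n)}_k(x)t^k=(1-t)^x(1+t)^{n-x}$ is just the coefficient form of \eqref{eq04}. The factorization
\[
(1-t)^x(1+t)^{n-x}-(1-t)^{x-1}(1+t)^{n-x+1}=-2t\,(1-t)^{x-1}(1+t)^{(n-1)-(x-1)}
\]
then gives $K^{(n)}_k(x)-K^{(n)}_k(x-1)=-2K^{(n-1)}_{k-1}(x-1)$ for $k\geqslant 1$. You also handle the normalization $\binom{n-1}{k-1}/\binom nk=\frac kn$ and the index ranges $x-1,\,k-1\in\{0,\dots,n-1\}$ correctly.

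The paper does not prove this lemma itself; it imports it from the computations on pages 65--66 of \cite{HKS}. So your derivation supplies the missing argument rather than competing with one. It is the standard proof of this identity. You could do the same cancellation by hand inside the sum \eqref{eq04} using Pascal's rule, $\binom xj=\binom{x-1}j+\binom{x-1}{j-1}$ and $\binom{n-x+1}{k-j}=\binom{n-x}{k-j}+\binom{n-x}{k-j-1}$. The generating function simply packages that computation and makes the lemma self-contained at almost no cost.
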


For $t\in [0,\infty)$ we define the linear operator $N_t$ on the space $\mathbb C^{\mathbb I^n}$ by
\begin{equation}\label{eq48}
    N_t\chi_y:=e^{-t|y|}\chi_y\qquad\text{for }y\in\mathbb I^n.
\end{equation}
It is easy to note that the operators $N_t,\:\: t\in [0,\infty)$, form a uniformly continuous semigroup on $\ell^2(\mathbb I^n)$, which we call the noise semigroup.

The next lemma provides another formula for the operators $N_t$.

\begin{lemma}\label{thm:inny_wzor_na_Nt}
    For $t\in [0,\infty)$ let $u_t:=\frac{1-e^{-t}}2$ and
    \begin{equation}\label{eq03}
        M_t:=\sum_{k=0}^n\binom nku_t^k(1-u_t)^{n-k} S_k.
    \end{equation}
    Then $M_t=N_t$.
\end{lemma}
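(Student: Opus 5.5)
Since $(\chi_y:y\in\mathbb I^n)$ is a basis and both $M_t$ and $N_t$ are linear, it suffices to verify $M_t\chi_y=e^{-t|y|}\chi_y$ for every $y\in\mathbb I^n$. By \eqref{eq47},
\begin{equation*}
    M_t\chi_y=\Bigg(\sum_{k=0}^n\binom nk u_t^k(1-u_t)^{n-k}\kappa^{(n)}_k(|y|)\Bigg)\chi_y,
\end{equation*}
so everything reduces to the scalar identity $\sum_{k}\binom nk u^k(1-u)^{n-k}\kappa^{(n)}_k(x)=(1-2u)^x$ for $x\in\{0,...,n\}$. Note that $1-2u_t=e^{-t}$, so this identity is exactly what is needed.

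To prove the identity, I would not manipulate the Krawtchouk formula \eqref{eq04} directly. Instead, I would return to the definition $\mathfrak m_k(y)=\frac1{|R_k|}\sum_{w\in R_k}(-1)^{w\cdot y}$ and use $|R_k|=\binom nk$. The $\binom nk$ factors then cancel:
\begin{equation*}
    \sum_{k=0}^n\binom nk u^k(1-u)^{n-k}\mathfrak m_k(y)=\sum_{w\in\mathbb I^n}u^{|w|}(1-u)^{n-|w|}(-1)^{w\cdot y}.
\end{equation*}
This is the expectation of $(-1)^{w\cdot y}$ when $w$ has independent Bernoulli$(u)$ coordinates. The sum therefore factorizes over coordinates as $\prod_{j=1}^n\big((1-u)+u(-1)^{y(j)}\big)$. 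Each coordinate with $y(j)=0$ contributes $1$, and each with $y(j)=1$ contributes $1-2u$. The product is $(1-2u)^{|y|}$, and substituting $u=u_t$ gives $e^{-t|y|}$.

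Equivalently, $M_t f=f*\mu_t$, where $\mu_t(w)=u_t^{|w|}(1-u_t)^{n-|w|}$ is the product Bernoulli measure. A convolution operator acts on characters by multiplication by the Fourier transform of its kernel, which is the product computed above. There is no real obstacle here. The only point requiring care is the bookkeeping step where the binomial weight $\binom nk$ cancels the normalization $1/|R_k|$, which turns the sum over $k$ into the product measure on $\mathbb I^n$.
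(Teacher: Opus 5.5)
Your proof is correct, but it takes a different route from the paper.

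Both arguments reduce to the scalar identity $\sum_{k=0}^n\binom nk u_t^k(1-u_t)^{n-k}\kappa^{(n)}_k(x)=e^{-tx}$ for $x\in\{0,\dots,n\}$. The paper proves it from the explicit Krawtchouk formula \eqref{eq04}. It restricts the inner index to $\max\{0,x+k-n\}\leqslant j\leqslant\min\{k,x\}$, interchanges the sums over $k$ and $j$, and substitutes $l=k-j$. It then applies the binomial theorem twice: once to get $(u_t+1-u_t)^{n-x}=1$, and once to get $(1-2u_t)^x$.

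You instead cancel the normalization $1/|R_k|=1/\binom nk$ in $\mathfrak m_k(y)$. The sum over $k$ then collapses into a single sum over $w\in\mathbb I^n$ against the product Bernoulli weight $u^{|w|}(1-u)^{n-|w|}$, and the identity follows by factorizing over coordinates.

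What your route buys:
\begin{itemize}
\item It is shorter and avoids the index bookkeeping.
\item It needs only $S_k\chi_y=\mathfrak m_k(y)\chi_y$, which is immediate from the definition of convolution. The paper's \eqref{eq47} rests on the identification $\mathfrak m_k(y)=\kappa^{(n)}_k(|y|)$, whose calculation it omits, so your detour through \eqref{eq47} in the first display is unnecessary.
\item It exhibits $N_t$ as convolution with a probability measure. This makes the contraction, positivity and conservation properties in Lemma \ref{thm:Nt_sym_polgr_dyfuzji} transparent.
\end{itemize}

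What the paper's route buys: it stays at the level of the one-variable polynomials. It derives the generating-function identity $\sum_k\binom nk u^k(1-u)^{n-k}\kappa^{(n)}_k(x)=(1-2u)^x$ directly from \eqref{eq04}, in the same formula-based spirit as the later Krawtchouk estimates.
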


In \cite{HKS} the noise semigroup is defined by the formula \eqref{eq03}.

\begin{proof}[Proof of Lemma \ref{thm:inny_wzor_na_Nt}]
    We will show that the operators $M_t$ and $N_t$ agree on the basis $(\chi_y:y\in\mathbb I^n)$. We have
    \begin{equation*}
        M_t\chi_y=\underbrace{\left( \sum_{k=0}^n\binom nku_t^k(1-u_t)^{n-k}\kappa^{(n)}_k(|y|) \right)}_{\qquad\quad\; =:\; \varphi_t(|y|)} \chi_y.
    \end{equation*}
    It suffices to show that for $x\in\{0,...,n\}$ we get $\varphi_t(x)=e^{-tx}$.\\
    Firstly let us note that for $j\in\mathbb Z$, if $j>x$ then $\binom xj=0$ while if $j<x+k-n$ then $\binom{n-x}{k-j}=0$. Hence we can rewrite the formula \eqref{eq04} as follows:
    \begin{equation*}
        \kappa^{(n)}_k(x)=\frac 1{\binom nk}\sum_{j=\max\{0,x+k-n\}}^{\min\{k,x\}}(-1)^j\binom xj\binom{n-x}{k-j}.
    \end{equation*}
    Using the formula above, for $x\in\{0,...,n\}$ we calculate
    \begin{equation*}
        \begin{split}
            \varphi&_t(x)=\sum_{k=0}^nu_t^k(1-u_t)^{n-k} \binom nk \kappa^{(n)}_k(x) =\sum_{k=0}^nu_t^k(1-u_t)^{n-k} \sum_{j=\max\{0,x+k-n\}}^{\min\{k,x\}}(-1)^j\binom xj\binom{n-x}{k-j}\\
            &=\sum_{j=0}^x(-1)^j\binom xj\sum_{k=j}^{j+n-x} u_t^k(1-u_t)^{n-k} \binom{n-x}{k-j} =\sum_{j=0}^x(-1)^j\binom xj\sum_{l=0}^{n-x} u_t^{l+j}(1-u_t)^{n-l-j} \binom{n-x}l\\
            &=\sum_{j=0}^x(-u_t)^j\binom xj(1-u_t)^{x-j} \underbrace{\sum_{l=0}^{n-x}u_t^l(1-u_t)^{n-x-l} \binom{n-x}l}_{\qquad\qquad\qquad\;\;\; =(u_t+1-u_t)^{n-x}=1} =(1-2u_t)^x=(e^{-t})^x.
        \end{split}
    \end{equation*}
\end{proof}

Lemma \ref{thm:inny_wzor_na_Nt} will help us to prove the following:

\begin{lemma}\label{thm:Nt_sym_polgr_dyfuzji}
    The indexed family of operators $(N_t:t\in [0,\infty))$ is a symmetric diffusion semigroup (in the sense of \cite[Chapter III]{Stein}).
\end{lemma}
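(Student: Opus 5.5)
We have to verify the axioms of a symmetric diffusion semigroup in Stein's sense, Chapter III. These are: (i) the semigroup property $N_0=I$ and $N_{t+s}=N_tN_s$; (ii) contractivity $\|N_tf\|_p\leqslant\|f\|_p$ for all $p\in[1,\infty]$; (iii) symmetry, i.e.\ each $N_t$ is self-adjoint on $\ell^2(\mathbb I^n)$; (iv) positivity preservation, $N_tf\geqslant 0$ whenever $f\geqslant 0$; (v) the Markov property $N_t1=1$; and (vi) strong continuity $N_tf\to f$ in $\ell^2$ as $t\to 0^+$.

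Properties (i), (iii), (v) and (vi) follow directly from the definition \eqref{eq48} on the orthonormal basis $(\widetilde{\chi_y})$.
\begin{itemize}
\item For (i), we have $e^{-0|y|}=1$ and $e^{-(t+s)|y|}=e^{-t|y|}e^{-s|y|}$.
\item For (iii), $N_t$ is diagonal in an orthonormal basis with real eigenvalues, hence self-adjoint.
\item For (v), the constant function is $\chi_{0}$, with $|0|=0$, so $N_t1=1$.
\item For (vi), by Plancherel \eqref{eq02} we get $\|N_tf-f\|_2^2=\sum_y|e^{-t|y|}-1|^2|\widehat f(y)|^2\to0$. This is a finite sum since $\mathbb I^n$ is finite; in fact we even get operator-norm continuity.
\end{itemize}

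The remaining properties (ii) and (iv) are where Lemma \ref{thm:inny_wzor_na_Nt} is used. By that lemma, $N_t=M_t=\sum_{k=0}^n w_k(t)S_k$ with weights $w_k(t)=\binom nk u_t^k(1-u_t)^{n-k}$. Since $u_t=\frac{1-e^{-t}}2\in[0,\frac12)$, all $w_k(t)\geqslant0$, and by the binomial theorem $\sum_kw_k(t)=(u_t+1-u_t)^n=1$. Thus $N_t$ is a convex combination of the operators $S_k$.

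For (iv), each $S_k$ is convolution with the nonnegative kernel $\frac1{|R_k|}\mathds 1_{R_k}$ and so preserves positivity. A nonnegative combination of such operators therefore preserves positivity as well.

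For (ii), the triangle inequality together with Lemma \ref{thm:Sk_kontrakcja} gives
\begin{equation*}
\|N_tf\|_p\leqslant\sum_kw_k(t)\|S_kf\|_p\leqslant\sum_kw_k(t)\|f\|_p=\|f\|_p.
\end{equation*}

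There is no genuine obstacle here. The only nontrivial input, the positivity of the kernel and the $\ell^p$ contractivity, is supplied by the representation of Lemma \ref{thm:inny_wzor_na_Nt}. The one care point is to check $u_t\in[0,1]$, so that the weights are genuinely nonnegative, and to observe that the weights sum to one.
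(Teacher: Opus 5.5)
Your proposal is correct and follows essentially the same route as the paper: contraction and positivity come from writing $N_t=M_t$ as a convex combination of the $S_k$ via Lemma~\ref{thm:inny_wzor_na_Nt} together with Lemma~\ref{thm:Sk_kontrakcja}, while symmetry and conservation come from the diagonal action on the characters. Your explicit checks of the semigroup law and strong continuity are a harmless addition; the paper simply remarks on these just before stating the lemma.
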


This lemma is well-known. We present the proof for the reader's convenience.

\begin{proof}[Proof of Lemma \ref{thm:Nt_sym_polgr_dyfuzji}]
    We need to check that for every $t\in [0,\infty)$ the operator $N_t$ has the properties of contraction, symmetry, positivity, and conservation.\\
    {\bf Contraction property.} Using Lemma \ref{thm:inny_wzor_na_Nt} and Lemma \ref{thm:Sk_kontrakcja}, for any $p\in[1,\infty]$ and $f:\mathbb I^n\to\mathbb C$ we obtain
    \begin{equation*}
        \begin{split}
            \|N_tf\|_p&\leqslant \sum_{k=0}^n\binom nku_t^k(1-u_t)^{n-k} \|S_kf\|_p \leqslant \sum_{k=0}^n\binom nku_t^k(1-u_t)^{n-k} \|f\|_p=\|f\|_p.
        \end{split}
    \end{equation*}
    {\bf Symmetry property.} For $y,z\in\mathbb I^n$ we have
    \begin{equation*}
        \begin{split}
            \langle N_t\widetilde{\chi_y}, \widetilde{\chi_z}\rangle &=\langle e^{-t|y|} \widetilde{\chi_y}, \widetilde{\chi_z}\rangle = e^{-t|y|}\delta_{y,z} = e^{-t|z|}\delta_{y,z}= \langle\widetilde{\chi_y}, e^{-t|z|}\widetilde{\chi_z}\rangle = \langle \widetilde{\chi_y}, N_t\widetilde{\chi_z}\rangle ,
        \end{split}
    \end{equation*}
    where $\delta_{y,z}$ is the Kronecker delta. By linearity of $N_t$ and sesquilinearity of $\langle\cdot,\cdot\rangle$ we get $\langle N_tf,g\rangle =\langle f,N_tg\rangle$ for all functions $f,g:\mathbb I^n\to\mathbb C$.\\
    {\bf Positivity property.} For any function $f:\mathbb I^n\to [0,\infty)$ we have $S_kf=f*\frac 1{|R_k|}\mathds 1_{R_k}\geqslant 0$ so by Lemma \ref{thm:inny_wzor_na_Nt} we obtain
    \begin{equation*}
        N_tf=\sum_{k=0}^n\binom nku_t^k(1-u_t)^{n-k} S_kf\geqslant 0.
    \end{equation*}
    {\bf Conservation property.} Let us denote by $\mathbf 0_n$ the element of $\mathbb I^n$ consisting of only zeros. Then we have $\mathds 1_{\mathbb I^n}=\chi_{\mathbf 0_n}$ and hence
    \begin{equation*}
        N_t\mathds 1_{\mathbb I^n}=N_t\chi_{\mathbf 0_n}=e^{-t|\mathbf 0_n|}\chi_{\mathbf 0_n}=\mathds 1_{\mathbb I^n}.
    \end{equation*}
\end{proof}

\subsection{\texorpdfstring{The $r$-variation seminorm}{The r-variation seminorm}}
Let $r\in [1,\infty)$ and $Z\subseteq [0,\infty)$. For a function $Z\ni t\mapsto a_t\in\mathbb C$ we define the $r$-variation seminorm:
\begin{equation*}
    V_r(a_t:t\in Z):=\sup_{\begin{smallmatrix} t_0<...<t_J\\ t_j\in Z \end{smallmatrix}} \left( \sum_{j=1}^J|a_{t_{j-1}}-a_{t_j}|^r \right)^{\frac 1r},
\end{equation*}
where the supremum is taken over all finite nonempty increasing sequences in $Z$. The supremum is taken in the partially ordered set $[0,\infty]$ (hence $\sup\emptyset =0$). Of course, we could equivalently consider nondecreasing sequences $(t_0,...,t_J)$ instead of increasing ones. Anyway, we can assume that for every sequence $(t_0,...,t_J)$ which we take the supremum over we have $J\geqslant 1$ (since if $J=0$ then $\sum_{j=1}^J|a_{t_{j-1}}-a_{t_j}|^r=0$).\\
We collect here several properties of the $r$-variation seminorm:
\begin{itemize}
    \item If $1\leqslant r_1\leqslant r_2<\infty$ then
    \begin{equation}\label{eq07}
        V_{r_2}(a_t:t\in Z)\leqslant V_{r_1}(a_t:t\in Z).
    \end{equation}
    \item If $Y\subseteq Z$ then
    \begin{equation*}
        V_r(a_t:t\in Y)\leqslant V_r(a_t:t\in Z).
    \end{equation*}
    \item If $W\subseteq [0,\infty)$ and $\varphi$ is a nondecreasing function from $W$ into $Z$ then
    \begin{equation}\label{eq23}
        V_r(a_{\varphi(s)}:s\in W)=V_r(a_t:t\in\varphi[W]).
    \end{equation}
    \item Triangle inequality: if $t\mapsto b_t$ is a function from $Z$ into $\mathbb C$ then
    \begin{equation}\label{eq05}
        V_r(a_t+b_t:t\in Z)\leqslant V_r(a_t:t\in Z)+V_r(b_t:t\in Z).
    \end{equation}
    \item If the set $Z$ is countable then
    \begin{equation}\label{eq06}
        V_r(a_t:t\in Z)\leqslant 2\left( \sum_{t\in Z}|a_t|^r \right)^{\frac 1r}.
    \end{equation}
    \item Let us denote $\mathbb D:=\{2^l:l\in\mathbb Z\}$ and
    \begin{equation*}
        \lfloor t\rfloor_D:=\max\{k\in\mathbb D:k\leqslant t\} \qquad\text{for }t\in (0,\infty).
    \end{equation*}
    If $Z\subseteq (0,\infty)$ and $(\forall t\in Z)\lfloor t\rfloor_D\in Z$ then
    \begin{equation}\label{eq08}
        \begin{split}
            V_r(a_t:t\in Z)\lesssim &\, V_r(a_t:t\in\mathbb D\cap Z) +\left( \sum_{k\in\mathbb D}V_r(a_t:t\in [k,2k)\cap Z)^r \right)^{\frac 1r}.
        \end{split}
    \end{equation}
\end{itemize}
All these properties are standard but for the reader's convenience we will prove the last one:

\begin{proof}[Proof of the inequality \eqref{eq08}]
    Let us assume that $Z\subseteq (0,\infty)$ and $(\forall t\in Z)\lfloor t\rfloor_D\in Z$. %Let us denote
    % \begin{equation*}
    %     \mathbb D_Z:=\{\lfloor t\rfloor_D:t\in Z\}.
    % \end{equation*}
    % It is easy to check that
    % \begin{equation}\label{eq10}
    %     \mathbb D\cap Z=\mathbb D_Z.
    % \end{equation}
    Let us take any finite sequence $t_0\!<\!...\!<\!t_J,\;\; t_j\!\in\! Z$. Let
    \begin{equation*}
        B_1:=\big\{j\in\{1,...,J\}:\lfloor t_{j-1}\rfloor_D=\lfloor t_j\rfloor_D\big\}
    \end{equation*}
    and
    \begin{equation*}
        \begin{split}
            B_2:=&\;\big\{j\in\{1,...,J\}:\lfloor t_{j-1}\rfloor_D<\lfloor t_j\rfloor_D\big\}\\
            =&\;\{1,...,J\}\setminus B_1.
        \end{split}
    \end{equation*}
    We have
    \begin{equation}\label{eq09}
        \begin{split}
            \sum_{j\in B_1}|a_{t_{j-1}}-a_{t_j}|^r &=\sum_{k\in\mathbb D} \sum_{\begin{smallmatrix}
                j\in B_1\\
                \lfloor t_j\rfloor_D=k
            \end{smallmatrix}}|a_{t_{j-1}}-a_{t_j}|^r \leqslant\sum_{k\in\mathbb D}V_r(a_t:t\in [k,2k)\cap Z)^r.
        \end{split}
    \end{equation}
    Next, since for $j\in\{0,...,J\}$ we have $\lfloor t_j\rfloor_D\in Z$, we can write for $j\in B_2$ the following:
    \begin{equation*}
        a_{t_{j-1}}-a_{t_j}= (a_{t_{j-1}}-a_{\lfloor t_{j-1}\rfloor_D})+(a_{\lfloor t_{j-1}\rfloor_D}-a_{\lfloor t_j\rfloor_D})+(a_{\lfloor t_j\rfloor_D}-a_{t_j}),
    \end{equation*}
    and by Minkowski's inequality we get
    \begin{equation*}
        \begin{split}
            \Bigg( &\sum_{j\in B_2}|a_{t_{j-1}}-a_{t_j}|^r \Bigg)^{\frac 1r}\\
            &\leqslant \Bigg( \sum_{j\in B_2}|a_{t_{j-1}}-a_{\lfloor t_{j-1}\rfloor_D}|^r \Bigg)^{\frac 1r}+ \Bigg( \sum_{j\in B_2}|a_{\lfloor t_{j-1}\rfloor_D}-a_{\lfloor t_j\rfloor_D}|^r \Bigg)^{\frac 1r}+ \Bigg( \sum_{j\in B_2}|a_{\lfloor t_j\rfloor_D}-a_{t_j}|^r \Bigg)^{\frac 1r}\\
            &=:\mathcal I_1^{\frac 1r}+ \mathcal I_2^{\frac 1r}+ \mathcal I_3^{\frac 1r}.
        \end{split}
    \end{equation*}
    %Since $\lfloor t_j\rfloor_D\in Z$ for $j\in\{0,...,J\}$,
    We have
    \begin{equation*}
        \begin{split}
            \mathcal I_3 &\leqslant\sum_{j\in B_2}V_r\big(a_t:t\in \big[\lfloor t_j\rfloor_D,2\lfloor t_j\rfloor_D\big)\cap Z\big)^r \leqslant\sum_{k\in\mathbb D}V_r(a_t:t\in [k,2k)\cap Z)^r,
        \end{split}
    \end{equation*}
    where the last inequality follows from the observation that for $j_1,j_2\!\in\! B_2,\;\: j_1\!<\!j_2$, we have $t_{j_1}\!\leqslant\! t_{j_2-1}$ and $\lfloor t_{j_1}\rfloor_D\!\leqslant\!\lfloor t_{j_2-1}\rfloor_D\!<\! \lfloor t_{j_2}\rfloor_D$.\\
    Similarly we get
    \begin{equation*}
        \begin{split}
            \mathcal I_1 &\leqslant\sum_{j\in B_2}V_r\big(a_t:t\in \big[\lfloor t_{j-1}\rfloor_D,2\lfloor t_{j-1}\rfloor_D\big)\cap Z\big)^r \leqslant\sum_{k\in\mathbb D}V_r(a_t:t\in [k,2k)\cap Z)^r,
        \end{split}
    \end{equation*}
    where the last inequality follows from the observation that for $j_1,j_2\!\in\! B_2,\;\: j_1\!<\!j_2$, we have $t_{j_1}\!\leqslant\! t_{j_2-1}$ and $\lfloor t_{j_1-1}\rfloor_D\!<\! \lfloor t_{j_1}\rfloor_D\!\leqslant\!\lfloor t_{j_2-1}\rfloor_D$.\\
    Next, (if $B_2\neq\emptyset$) we can write $B_2=\{i_1,...,i_M\}$, where $i_1\!<\!...\!<\!i_M$. Then we have $t_{i_1-1} \!\leqslant t_{i_1}\!\leqslant t_{i_2-1} \!\leqslant t_{i_2} \!\leqslant ... \leqslant t_{i_M-1} \!\leqslant t_{i_M}$ and ${\lfloor t_{i_1-1}\rfloor_D\!\leqslant\!\lfloor t_{i_1}\rfloor_D\!\leqslant\! \lfloor t_{i_2-1}\rfloor_D\!\leqslant\!\lfloor t_{i_2}\rfloor_D\!\leqslant ... \leqslant\! \lfloor t_{i_M-1}\rfloor_D\!\leqslant\!\lfloor t_{i_M}\rfloor_D}$. Hence
    \begin{equation*}
        \mathcal I_2^{\frac 1r}=\left( \sum_{m=1}^M|a_{\lfloor t_{i_m-1}\rfloor_D}- a_{\lfloor t_{i_m}\rfloor_D}|^r \right)^{\frac 1r}\leqslant V_r(a_t:t\in\mathbb D\cap Z).
    \end{equation*}
    We obtain
    \begin{equation}\label{eq11}
        \begin{split}
            \left( \sum_{j\in B_2} |a_{t_{j-1}}-a_{t_j}|^r \right)^{\frac 1r} &\leqslant\mathcal I_1^{\frac 1r}+ \mathcal I_2^{\frac 1r}+ \mathcal I_3^{\frac 1r}\\
            &\leqslant 2\left( \sum_{k\in\mathbb D}V_r(a_t:t\in [k,2k)\cap Z)^r \right)^{\frac 1r} + V_r(a_t:t\in\mathbb D\cap Z).
        \end{split}
    \end{equation}
    By \eqref{eq09} and \eqref{eq11} we get
    \begin{equation*}
        \begin{split}
            \Bigg( \sum_{j=1}^J |a_{t_{j-1}}-a_{t_j}|^r \Bigg)^{\frac 1r} &\leqslant \left( \sum_{j\in B_1} |a_{t_{j-1}}-a_{t_j}|^r \right)^{\frac 1r} + \left( \sum_{j\in B_2} |a_{t_{j-1}}-a_{t_j}|^r \right)^{\frac 1r}\\
            &\leqslant 3\left( \sum_{k\in\mathbb D}V_r(a_t:t\in [k,2k)\cap Z)^r \right)^{\frac 1r} + V_r(a_t:t\in\mathbb D\cap Z).
        \end{split}
    \end{equation*}
    Hence
    \begin{equation*}
        V_r(a_t:t\in Z) \lesssim \left( \sum_{k\in\mathbb D}V_r(a_t:t\in [k,2k)\cap Z)^r \right)^{\frac 1r} + V_r(a_t:t\in\mathbb D\cap Z),
    \end{equation*}
    which is the inequality \eqref{eq08}. %in view of \eqref{eq10}.
\end{proof}

We will need two technical lemmas stated below. Lemma \ref{thm:rozbicie_przedzialu} is stated and proved in \linebreak
\cite[proof of Lemma 1]{MT}.

\begin{lemma}\label{thm:rozbicie_przedzialu}
    Let $l\in\{0,1,2,...\}$ and for $g\in\{0,...,l\}$ let
    \begin{equation*}
        \mathcal I_g:=\big\{ [(h\!-\!1)2^g,h2^g):h\in\{1,...,2^{l-g}\}\big\}.
    \end{equation*}
    For every $a,b\in\{0,...,2^l\}$ such that $a<b$ there exists a pairwise disjoint family $\mathcal A(a,b)\subseteq\bigcup_{g=0}^l\mathcal I_g$ such that
    \begin{equation*}
        [a,b)=\bigcup_{P\in\mathcal A(a,b)}P\qquad\text{and}\qquad (\forall g)\;|\mathcal I_g\cap\mathcal A(a,b)|\leqslant 2.
    \end{equation*}
\end{lemma}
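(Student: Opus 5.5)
We argue by a greedy dyadic decomposition, proceeding by induction on $l$ (or, equivalently, by an explicit construction). Fix $a<b$ in $\{0,\dots,2^l\}$. Let $g^*$ be the largest $g\in\{0,\dots,l\}$ for which some interval $[(h-1)2^g,h2^g)\in\mathcal I_g$ is contained in $[a,b)$; such a $g$ exists because $[a,a+1)\in\mathcal I_0$. The key observation is that at most two intervals of $\mathcal I_{g^*}$ lie inside $[a,b)$. Indeed, three consecutive ones would contain two adjacent intervals $[(2m-2)2^{g^*},2m\,2^{g^*})$ forming an element of $\mathcal I_{g^*+1}$ (when $g^*<l$), which contradicts maximality; when $g^*=l$ there is only one interval in total. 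Moreover, the intervals of $\mathcal I_{g^*}$ contained in $[a,b)$ are consecutive, since $[a,b)$ is an interval. Their union $[c,d)$ therefore satisfies $a\le c\le d\le b$, with $c,d$ multiples of $2^{g^*}$.

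It remains to cover the leftover pieces $[a,c)$ and $[d,b)$. Here $c-a<2^{g^*}$, since otherwise another $\mathcal I_{g^*}$-interval would fit to the left, and likewise $b-d<2^{g^*}$. We cover $[d,b)$ using the binary expansion of $b-d$. Write $b-d=\sum_{g\in G}2^g$ with $G\subseteq\{0,\dots,g^*-1\}$ and take consecutive blocks starting at $d$ in decreasing order of $g$. Each block $[d+\sum_{g'\in G,g'>g}2^{g'},\,\cdot+2^g)$ begins at a multiple of $2^g$, because $d$ and all the larger powers already added are multiples of $2^g$. Hence each block belongs to $\mathcal I_g$, and each level $g<g^*$ is used at most once on the right. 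Symmetrically, we cover $[a,c)$ by expanding $c-a$ in binary and placing the blocks leftward from $c$ in decreasing order of size. Each endpoint $c-\sum(\text{larger powers})-2^g$ is again a multiple of $2^g$, so these blocks lie in $\mathcal I_g$, each $g<g^*$ appearing at most once.

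Collecting everything, levels $g>g^*$ are unused, level $g^*$ is used at most twice, and each level $g<g^*$ is used at most once on the left and at most once on the right, so at most twice in total. All the pieces are pairwise disjoint, and their union is $[a,c)\cup[c,d)\cup[d,b)=[a,b)$. A small point needs care: if no $\mathcal I_{g^*}$-interval were inside $[a,b)$ the construction would break, but maximality is taken over $g$ for which such an interval exists, so this cannot happen. The only step requiring real verification is the claim that at most two $\mathcal I_{g^*}$-intervals fit. The argument above handles it: among any three consecutive aligned blocks of size $2^{g^*}$, some adjacent pair starts at an even multiple of $2^{g^*}$ and so forms an $\mathcal I_{g^*+1}$ interval. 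That pair also lies within $[0,2^l)$, since it is contained in $[a,b)\subseteq[0,2^l)$.
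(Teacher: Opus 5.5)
Your argument is correct and complete. The maximal level $g^*$ admits at most two aligned blocks inside $[a,b)$, since three would contain an element of $\mathcal I_{g^*+1}$; the leftovers $[a,c)$ and $[d,b)$ are shorter than $2^{g^*}$; and the binary expansions anchored at the $2^{g^*}$-aligned endpoints $c$ and $d$ use each level $g<g^*$ at most once per side. The paper does not prove this lemma itself but defers to the proof of Lemma~1 in [MT], which I cannot check here; it presumably uses this same standard dyadic binary-expansion decomposition, so your route is essentially the standard one.
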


The following lemma is a modification of \cite[Lemma 1]{MT} and has a similar proof. We provide details for the reader's convenience.

\begin{lemma}\label{thm:usuniecie_poln_war}
    Let $s\in [1,\infty),\;\;l\in\{0,1,2,...\}$, and $M\in\mathbb Z$. Let $k\mapsto a_k$ be a~function from $\{0,...,2^l\}\cap (-\infty,M]$ into $\mathbb C$. Then
    \begin{equation*}
        V_s\big( a_k:k\in \{0,...,2^l\}\cap (-\infty,M]\big)\leqslant 2^{1-\frac 1s}\sum_{g=0}^l\left( \sum_{\begin{smallmatrix}
            h\in\{1,...,2^{l-g}\}\\
            h2^g\leqslant M
        \end{smallmatrix}} |a_{(h-1)2^g}-a_{h2^g}|^s \right)^{\frac 1s}.
    \end{equation*}
\end{lemma}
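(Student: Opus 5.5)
Fix an increasing sequence $k_0<k_1<\dots<k_J$ in $D:=\{0,\dots,2^l\}\cap(-\infty,M]$ with $J\geqslant 1$. For each $j$ we apply Lemma \ref{thm:rozbicie_przedzialu} to $a=k_{j-1}$, $b=k_j$, which gives a disjoint family $\mathcal A_j:=\mathcal A(k_{j-1},k_j)$ of dyadic intervals $[(h-1)2^g,h2^g)$ whose union is $[k_{j-1},k_j)$. Every $P=[(h-1)2^g,h2^g)\in\mathcal A_j$ lies inside $[k_{j-1},k_j)$, so $h2^g\leqslant k_j\leqslant M$. Hence both endpoints of $P$ are in the domain of $a$, and $P$ occurs in the constrained sum on the right-hand side. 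Order the intervals of $\mathcal A_j$ from left to right and telescope:
\begin{equation*}
a_{k_{j-1}}-a_{k_j}=\sum_{P\in\mathcal A_j}\big(a_{\min P}-a_{\sup P}\big)=\sum_{g=0}^l\sum_{P\in\mathcal A_j\cap\mathcal I_g}\big(a_{(h_P-1)2^g}-a_{h_P2^g}\big),
\end{equation*}
where $h_P$ denotes the index $h$ of $P$.

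Set $D_j^g:=\sum_{P\in\mathcal A_j\cap\mathcal I_g}(a_{(h_P-1)2^g}-a_{h_P2^g})$. By Minkowski's inequality in $\ell^s$ over $j$,
\begin{equation*}
\Big(\sum_{j=1}^J|a_{k_{j-1}}-a_{k_j}|^s\Big)^{\frac1s}\leqslant\sum_{g=0}^l\Big(\sum_{j=1}^J|D_j^g|^s\Big)^{\frac1s}.
\end{equation*}
Each $D_j^g$ has at most two terms, because $|\mathcal I_g\cap\mathcal A_j|\leqslant 2$. Convexity then gives $|D_j^g|^s\leqslant 2^{s-1}\sum_{P\in\mathcal A_j\cap\mathcal I_g}|a_{(h_P-1)2^g}-a_{h_P2^g}|^s$. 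The intervals $[k_{j-1},k_j)$ are pairwise disjoint for different $j$, so the families $\mathcal A_j$ are disjoint as well. Summing over $j$, each dyadic interval with $h2^g\leqslant M$ is counted at most once, and
\begin{equation*}
\sum_{j=1}^J|D_j^g|^s\leqslant 2^{s-1}\sum_{\substack{h\in\{1,\dots,2^{l-g}\}\\ h2^g\leqslant M}}|a_{(h-1)2^g}-a_{h2^g}|^s.
\end{equation*}
Taking $s$-th roots produces the factor $2^{1-\frac1s}$. Taking the supremum over all sequences then proves the claim.

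The one point that needs care is the truncation at $M$. It is handled by the observation above that each dyadic piece of $[k_{j-1},k_j)$ has right endpoint at most $k_j\leqslant M$, so only admissible pairs appear. The rest is bookkeeping.
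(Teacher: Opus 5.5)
Your proposal is correct and follows essentially the same route as the paper. Both arguments decompose each $[k_{j-1},k_j)$ via Lemma \ref{thm:rozbicie_przedzialu}, apply Minkowski over the scales $g$, and use the at-most-two-intervals-per-scale bound to produce the factor $2^{1-\frac 1s}$. Both then finish with the disjointness of the families across $j$ together with $\sup P\leqslant k_j\leqslant M$ to handle the truncation. Your telescoping identity plus convexity is just the paper's triangle inequality plus H\"older in a slightly different form.
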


\begin{proof}
    For $g\in\{0,...,l\}$ let $\mathcal I_g$ be defined as in Lemma \ref{thm:rozbicie_przedzialu}.\\
    Let us take any finite sequence $k_0\!<\!...\!<\!k_J,\;\; k_j\!\in\! \{0,...,2^l\}\cap (-\infty,M]$. For $j\in\{1,...,J\}$ let us choose a family $\mathcal A(k_{j-1},k_j)$ as in Lemma \ref{thm:rozbicie_przedzialu} and let us denote $\mathcal B_j:= \mathcal A(k_{j-1},k_j)$ for short. Let us note that
    \begin{equation}\label{eq25}
        \begin{split}
            &(\forall j_1,j_2)(j_1\neq j_2\implies\mathcal B_{j_1}\cap\mathcal B_{j_2}=\emptyset)\quad\text{ and}\\
            &(\forall j)(\forall P\in\mathcal B_j)\sup P\leqslant M.
        \end{split}
    \end{equation}
    For every $j\in\{1,...,J\}$ we have
    \begin{equation*}
        |a_{k_{j-1}}-a_{k_j}|\leqslant\sum_{P\in\mathcal B_j}|a_{\inf P}-a_{\sup P}|=\sum_{g=0}^l \sum_{P\in\mathcal B_j\cap \mathcal I_g}|a_{\inf P}-a_{\sup P}|.
    \end{equation*}
    By the inequality above and by Minkowski's inequality we get
    \begin{equation}\label{eq26}
        \begin{split}
            \Bigg(\sum_{j=1}^J |a_{k_{j-1}}-a_{k_j}|^s\Bigg)^{\frac 1s}&\leqslant \Bigg(\sum_{j=1}^J \Bigg( \sum_{g=0}^l \sum_{P\in\mathcal B_j\cap \mathcal I_g}|a_{\inf P}-a_{\sup P}| \Bigg)^s\Bigg)^{\frac 1s}\\
            &\leqslant \sum_{g=0}^l\Bigg(\sum_{j=1}^J \Bigg( \sum_{P\in\mathcal B_j\cap \mathcal I_g}|a_{\inf P}-a_{\sup P}| \Bigg)^s\Bigg)^{\frac 1s}.
        \end{split}
    \end{equation}
    For every $g$ and $j$ we have (by H\"older's inequality)
    \begin{equation*}
        \begin{split}
            \sum_{P\in\mathcal B_j\cap \mathcal I_g}|a_{\inf P}-a_{\sup P}|&\leqslant |\mathcal B_j\cap \mathcal I_g|^{1-\frac 1s}\Bigg( \sum_{P\in\mathcal B_j\cap \mathcal I_g}|a_{\inf P}-a_{\sup P}|^s\Bigg)^{\frac 1s}\\
            &\leqslant 2^{1-\frac 1s}\Bigg( \sum_{P\in\mathcal B_j\cap \mathcal I_g}|a_{\inf P}-a_{\sup P}|^s\Bigg)^{\frac 1s}.
        \end{split}
    \end{equation*}
    Hence by \eqref{eq26} we get
    \begin{equation}\label{eq27}
        \Bigg(\sum_{j=1}^J |a_{k_{j-1}}-a_{k_j}|^s\Bigg)^{\frac 1s}\leqslant 2^{1-\frac 1s} \sum_{g=0}^l\Bigg(\sum_{j=1}^J \sum_{P\in\mathcal B_j\cap \mathcal I_g}|a_{\inf P}-a_{\sup P}|^s\Bigg)^{\frac 1s}.
    \end{equation}
    For every $g$, by \eqref{eq25} we have
    \begin{equation*}
        \sum_{j=1}^J \sum_{P\in\mathcal B_j\cap \mathcal I_g}|a_{\inf P}-a_{\sup P}|^s\leqslant\sum_{\begin{smallmatrix}
            P\in\mathcal I_g\\
            \sup P\leqslant M
        \end{smallmatrix}} |a_{\inf P}-a_{\sup P}|^s= \sum_{\begin{smallmatrix}
            h\in\{1,...,2^{l-g}\}\\
            h2^g\leqslant M
        \end{smallmatrix}} |a_{(h-1)2^g}-a_{h2^g}|^s.
    \end{equation*}
    Hence by \eqref{eq27} we get
    \begin{equation*}
        \Bigg(\sum_{j=1}^J |a_{k_{j-1}}-a_{k_j}|^s\Bigg)^{\frac 1s}\leqslant 2^{1-\frac 1s} \sum_{g=0}^l\left( \sum_{\begin{smallmatrix}
            h\in\{1,...,2^{l-g}\}\\
            h2^g\leqslant M
        \end{smallmatrix}} |a_{(h-1)2^g}-a_{h2^g}|^s \right)^{\frac 1s}.
    \end{equation*}
    Since the sequence $(k_0,...,k_J)$ was arbitrary, we obtain the desired inequality.
\end{proof}

\begin{remark}
    If $f$ is a function from $\mathbb I^n$ into $\mathbb C$ then writing
    \begin{equation*}
        V_r(S_kf:k\in\{0,...,n\})
    \end{equation*}
    we mean the function
    \begin{equation*}
        \mathbb I^n\ni x\mapsto V_r(S_kf(x):k\in\{0,...,n\}).
    \end{equation*}
\end{remark}

\section{Estimates for the Krawtchouk polynomials and the counterexamples}

% \begin{remark}\label{thm:szac_dla_krawtch_z_dwojka}
%     In fact we have proved that if $n\geqslant 2$ then for $k,x\in\{0,...,n\}$ we have
%     \begin{equation*}
%         |\kappa^{(n)}_k(x)-1|\leqslant 2\frac{kx}n.
%     \end{equation*}
% \end{remark}

\begin{lemma}\label{thm:szac_dla_krawtch}
    Let $k,x\in\{0,...,n\}$.
    \begin{itemize}
        \item[(a)] We have
        \begin{equation*}
            |\kappa^{(n)}_k(x)-1|\leqslant 2\frac{kx}n.
        \end{equation*}
        \item[(b)] If $k,x\geqslant 1$ and $k,x\leqslant\frac n2$ then
        \begin{equation*}
            |\kappa^{(n)}_k(x)|\lesssim\frac n{kx}.
        \end{equation*}
        \item[(c)] If $k\geqslant 1$ and $k,x\leqslant\frac n2$ then
        \begin{equation*}
            |\kappa^{(n)}_k(x)-\kappa^{(n)}_{k-1}(x)| \lesssim\frac 1k.
        \end{equation*}
    \end{itemize}
\end{lemma}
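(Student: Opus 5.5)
For (a) I will telescope in $x$ using Lemma \ref{thm:roznica_krawtch}. Since $\kappa^{(n)}_k(0)=1$ by Fact \ref{thm:podst_wlasn_krawtch}, for $n\geqslant 2$ and $k\geqslant 1$ we have
\begin{equation*}
    \kappa^{(n)}_k(x)-1=\sum_{y=1}^x\big(\kappa^{(n)}_k(y)-\kappa^{(n)}_k(y-1)\big)=-2\frac kn\sum_{y=1}^x\kappa^{(n-1)}_{k-1}(y-1).
\end{equation*}
Each term $\kappa^{(n-1)}_{k-1}(y-1)$ is bounded in absolute value by $1$ (Fact \ref{thm:podst_wlasn_krawtch}(1), applied in dimension $n-1$, where $k-1,y-1\in\{0,\dots,n-1\}$). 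This gives $|\kappa^{(n)}_k(x)-1|\leqslant 2kx/n$. The degenerate cases need separate remarks. When $k=0$ or $x=0$ both sides vanish. When $n=1$ and $k=x=1$ we have $|\kappa^{(1)}_1(1)-1|=2=2kx/n$.

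For (b), the natural route is Lemma \ref{thm:kappa_jak_e}, which gives $|\kappa^{(n)}_k(x)|\leqslant e^{-ckx/n}$ for $k,x\leqslant\lfloor n/2\rfloor$. The hypothesis $k,x\leqslant n/2$ with integers $k,x$ is the same as $k,x\leqslant\lfloor n/2\rfloor$. Set $u=kx/n>0$. The elementary inequality $e^{-cu}\leqslant \frac{1}{ecu}\lesssim 1/u$ then yields $|\kappa^{(n)}_k(x)|\lesssim n/(kx)$.

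For (c), I will use the symmetry $\kappa^{(n)}_k(x)=\kappa^{(n)}_x(k)$ (Fact \ref{thm:podst_wlasn_krawtch}(3)) to turn the difference in $k$ into a difference in the argument. Lemma \ref{thm:roznica_krawtch}, applied with the roles swapped (valid when $x\geqslant 1$ and $n\geqslant 2$), gives
\begin{equation*}
    \kappa^{(n)}_k(x)-\kappa^{(n)}_{k-1}(x)=\kappa^{(n)}_x(k)-\kappa^{(n)}_x(k-1)=-2\frac xn\,\kappa^{(n-1)}_{x-1}(k-1).
\end{equation*}
When $x=0$ the difference is $0$ by Fact \ref{thm:podst_wlasn_krawtch}(2) and symmetry, so I may assume $x\geqslant 1$. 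I then split into two cases.
\begin{itemize}
    \item If $x\leqslant n/k$, the trivial bound $|\kappa|\leqslant 1$ gives $2x/n\leqslant 2/k$.
    \item If $x>n/k$, I apply Lemma \ref{thm:kappa_jak_e} in dimension $n-1$ to $\kappa^{(n-1)}_{x-1}(k-1)$. This gives the bound $\frac{2x}{n}e^{-c(x-1)(k-1)/(n-1)}$, and I want this to be $\lesssim 1/k$.
\end{itemize}

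The main obstacle is the bookkeeping in the second case, since the shifts by one must not destroy the range condition or the exponential decay. I first need $x-1,k-1\leqslant\lfloor (n-1)/2\rfloor$, which follows from $x,k\leqslant n/2$. For the decay, when $k\geqslant 2$ and $x\geqslant 2$ we have $(x-1)(k-1)\geqslant xk/4$, so the exponent is at least $c\,xk/(4n)$. Then $\frac xn e^{-cxk/(4n)}=\frac1k\,u\,e^{-cu/4}\lesssim\frac1k$ with $u=xk/n$. The leftover cases are absorbed by the trivial bound. If $k=1$, the required bound $1/k=1$ is immediate, since each of the two Krawtchouk values has absolute value at most $1$. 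If $x=1$, the trivial bound gives $2/n\leqslant 1/k$ because $k\leqslant n/2$. Small $n$, for instance $n=1$, are covered in the same way.
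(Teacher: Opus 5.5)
Your proposal is correct and follows essentially the same route as the paper. For (a) you telescope in $x$ via Lemma~\ref{thm:roznica_krawtch}; for (b) you use Lemma~\ref{thm:kappa_jak_e} together with $e^{-t}\lesssim 1/t$; for (c) you combine the symmetry $\kappa^{(n)}_k(x)=\kappa^{(n)}_x(k)$ with the difference formula and handle $x=1$ or $k=1$ by the trivial bound. The only cosmetic difference is in the main case of (c): you apply the exponential bound directly in dimension $n-1$ (with a harmless, unnecessary split at $x\approx n/k$), whereas the paper applies part (b) in dimension $n-1$, which gives the same estimate.
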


\begin{proof}
    (a) It is easy to show the desired inequality in the case $n=1$. Therefore, we will consider only the case $n\geqslant 2$.\\
    Take any $z\in\{1,...,x\}$. If $k\geqslant 1$ then by Lemma \ref{thm:roznica_krawtch} we get
    \begin{equation*}
        |\kappa^{(n)}_k(z)-\kappa^{(n)}_k(z-1)|=2\frac kn|\kappa^{(n-1)}_{k-1}(z-1)|\leqslant 2\frac kn,
    \end{equation*}
    while if $k=0$ then
    \begin{equation*}
        |\kappa^{(n)}_k(z)-\kappa^{(n)}_k(z-1)|=|1-1|= 0\leqslant 2\frac kn.
    \end{equation*}
    Hence
    \begin{equation*}
        \begin{split}
            |1-\kappa^{(n)}_k(x)|&=|\kappa^{(n)}_k(0)- \kappa^{(n)}_k(x)|\leqslant\sum_{z=1}^x |\kappa^{(n)}_k(z-1)-\kappa^{(n)}_k(z)| \leqslant x\cdot 2\frac kn=2\frac{kx}n.
        \end{split}
    \end{equation*}

    \vspace{2ex}
    
    \noindent (b) The desired inequality follows easily from Lemma~\ref{thm:kappa_jak_e} and the inequality $e^{-t}\leqslant\frac 1t$ for $t>0$.

    \vspace{2ex}
    
    \noindent (c) It suffices to consider the case $n\geqslant 2$.\\
    If $x=0$ then
    \begin{equation*}
        |\kappa^{(n)}_k(x)-\kappa^{(n)}_{k-1}(x)|=|1-1|=0.
    \end{equation*}
    If $x\geqslant 1$ then by Lemma \ref{thm:roznica_krawtch} we can write
    \begin{equation*}
        |\kappa^{(n)}_k(x)-\kappa^{(n)}_{k-1}(x)|= |\kappa^{(n)}_x(k)-\kappa^{(n)}_x(k-1)|=2\frac xn|\kappa^{(n-1)}_{x-1}(k-1)|
    \end{equation*}
    and we consider three cases.\\
    $\underline{x=1}$. Then
    \begin{equation*}
        2\frac xn|\kappa^{(n-1)}_{x-1}(k-1)|=2\cdot\frac 1n\leqslant 2\cdot\frac 1k.
    \end{equation*}
    $\underline{x\geqslant 2\;\wedge\; k=1}$. Then
    \begin{equation*}
        2\frac xn|\kappa^{(n-1)}_{x-1}(k-1)|=2\frac xn\leqslant 2=2\cdot\frac 1k.
    \end{equation*}
    $\underline{x\geqslant 2\;\wedge\; k\geqslant 2}$. Then using Lemma \ref{thm:szac_dla_krawtch}(b) we get
    \begin{equation*}
        2\frac xn|\kappa^{(n-1)}_{x-1}(k-1)|\lesssim \frac xn\frac{n-1}{(x-1)(k-1)}=\frac{n-1}n \frac x{x-1}\frac k{k-1}\frac 1k\lesssim\frac 1k.
    \end{equation*}
    So for any $x\in\{0,...,\lfloor\frac n2\rfloor\}$ and $k\in\{1,...,\lfloor\frac n2\rfloor\}$ we obtain
    \begin{equation*}
        |\kappa^{(n)}_k(x)-\kappa^{(n)}_{k-1}(x)| \lesssim\frac 1k.
    \end{equation*}
\end{proof}

Let us denote by $\mathbf 1_n$ the element of $\mathbb I^n$ consisting of only ones.

\begin{counterexample}\label{thm:kontrprz_1}
    Let $r\in [1,\infty)$. Then
    \begin{equation*}
        \|V_r(S_k\chi_{\mathbf 1_n}:k\in\{0,...,n\})\|_2\geqslant 2n^{\frac 1r} \|\chi_{\mathbf 1_n}\|_2.
    \end{equation*}
\end{counterexample}

\begin{proof}
    Let us take any $x\in\mathbb I^n$. By the formula \eqref{eq47} and Fact \ref{thm:podst_wlasn_krawtch}, for any $k\in\{0,...,n\}$ we have
    \begin{equation*}
        S_k\chi_{\mathbf 1_n}(x)=\kappa^{(n)}_k(n)\chi_{\mathbf 1_n}(x)=(-1)^k\kappa^{(n)}_k(0)(-1)\!\!\!\!\overbrace{^{x\cdot \mathbf 1_n}}^{\quad\;\; =|x|}=(-1)^{k+|x|}.
    \end{equation*}
    Next,
    \begin{equation*}
        \begin{split}
            V_r(S_k\chi_{\mathbf 1_n}(x)&:k\in\{0,...,n\})\geqslant \left( \sum_{k=1}^n|S_{k-1}\chi_{\mathbf 1_n}(x)-S_k\chi_{\mathbf 1_n}(x)|^r \right)^{\frac 1r}\\
            &= \left( \sum_{k=1}^n| (-1)^{k-1+|x|}-(-1)^{k+|x|} |^r \right)^{\frac 1r}= \left( \sum_{k=1}^n 2^r \right)^{\frac 1r}= 2n^{\frac 1r}\mathds 1_{\mathbb I^n}(x).
        \end{split}
    \end{equation*}
    Hence
    \begin{equation*}
        \| V_r(S_k\chi_{\mathbf 1_n}:k\in\{0,...,n\})\|_2\geqslant \|2n^{\frac 1r}\mathds 1_{\mathbb I^n}\|_2=2n^{\frac 1r}\cdot\sqrt{2^n}= 2n^{\frac 1r}\|\chi_{\mathbf 1_n}\|_2.
    \end{equation*}
\end{proof}

\begin{counterexample}\label{thm:kontrprz_2}
    Let $r\in [1,\infty)$ and let $(a_m)_{m=1}^\infty$ be a sequence of numbers from $\big[\frac 13,\infty\big)$. For any $y\in\mathbb I^n$ such that $|y|\geqslant n-a_n$ we have
    \begin{equation*}
        \|V_r(S_k\chi_y:k\in\{0,...,n\})\|_2\geqslant \tfrac 23\Big\lfloor\tfrac n{3a_n}\Big\rfloor^{\frac 1r}\|\chi_y\|_2.
    \end{equation*}
\end{counterexample}

\begin{proof}
    Let us fix any $y\in\mathbb I^n$ such that $|y|\geqslant n-a_n$.\\
    For $k\in\{0,...,n\}$, by Lemma \ref{thm:szac_dla_krawtch}(a) we have
    \begin{equation*}
        \begin{split}
            |\kappa^{(n)}_k(|y|)-(-1)^k|=|(-1)^k \kappa^{(n)}_k(n-|y|)-(-1)^k|=| \kappa^{(n)}_k(n-|y|)-1| \leqslant 2\frac{k(n-|y|)}n\leqslant 2\frac{ka_n}n.
        \end{split}
    \end{equation*}
    Hence, if $k\leqslant \frac n{3a_n}$ then
    \begin{equation}\label{eq12}
        |\kappa^{(n)}_k(|y|)-(-1)^k|\leqslant\tfrac 23.
    \end{equation}
    For $k\in\big\{0,...,\big\lfloor \frac n{3a_n} \big\rfloor\big\}\setminus\big\{0\big\}$ we have $k\leqslant n$ (since $a_n\geqslant\frac 13$), and by triangle inequality and \eqref{eq12} we get
    \begin{equation*}
        \begin{split}
            2&=\big|(-1)^{k-1}-(-1)^k\big| \leqslant \big|(-1)^{k-1}\!-\!\kappa^{(n)}_{k-1}(|y|)\big|+\big| \kappa^{(n)}_{k-1}(|y|)\!-\! \kappa^{(n)}_k(|y|)\big|+\big| \kappa^{(n)}_k(|y|)\!-\!(-1)^k\big|\\
            &\leqslant \tfrac 23+ \big| \kappa^{(n)}_{k-1}(|y|)- \kappa^{(n)}_k(|y|)\big|+\tfrac 23
        \end{split}
    \end{equation*}
    and thus $\tfrac 23\leqslant \big| \kappa^{(n)}_{k-1}(|y|)- \kappa^{(n)}_k(|y|)\big|$.\\
    Let us take any $x\in\mathbb I^n$. For $k\in\big\{0,...,\big\lfloor \frac n{3a_n} \big\rfloor\big\}\setminus\big\{0\big\}$, by the last inequality we get
    \begin{equation*}
        \begin{split}
            \big| S_{k-1}\chi_y(x)-S_k\chi_y(x)\big|&=\big|\kappa^{(n)}_{k-1}(|y|)\chi_y(x)- \kappa^{(n)}_k(|y|)\chi_y(x)\big| =\big| \kappa^{(n)}_{k-1}(|y|)- \kappa^{(n)}_k(|y|)\big|\geqslant\tfrac 23.
        \end{split}
    \end{equation*}
    Therefore we obtain
    \begin{equation*}
        \begin{split}
            V_r(S_k\chi_y(x):k\in\{0,...,n\}) &\geqslant\left(\sum_{k=1}^{\left\lfloor\frac n{3a_n}\right\rfloor}\big|S_{k-1}\chi_y(x)-S_k\chi_y(x)\big|^r \right)^{\frac 1r}\\
            &\geqslant \left( \left\lfloor \tfrac n{3a_n} \right\rfloor\left( \tfrac 23 \right)^r \right)^{\frac 1r}=\tfrac 23 \left\lfloor \tfrac n{3a_n} \right\rfloor^{\frac 1r}\mathds 1_{\mathbb I^n}(x),
        \end{split}
    \end{equation*}
    and hence
    \begin{equation*}
        \begin{split}
            \| V_r(S_k\chi_y:k\in\{0,...,n\})\|_2 &\geqslant \left\| \tfrac 23 \left\lfloor \tfrac n{3a_n} \right\rfloor^{\frac 1r}\mathds 1_{\mathbb I^n} \right\|_2 = \tfrac 23 \left\lfloor \tfrac n{3a_n} \right\rfloor^{\frac 1r}\cdot\sqrt{2^n}= \tfrac 23 \left\lfloor \tfrac n{3a_n} \right\rfloor^{\frac 1r}\|\chi_y\|_2.
        \end{split}
    \end{equation*}
\end{proof}

\begin{remark}
    Counterexample \ref{thm:kontrprz_2} is of particular interest when the sequence $\big(\frac m{a_m}\big)_{m=1}^\infty$ is unbounded. In this case, it provides an example family of elements $y\in\mathbb I^n$ for which there is no dimension-free estimate of $\|V_r(S_k\chi_y:k\in\{0,...,n\})\|_2/\|\chi_y\|_2$. The condition $a_m\geqslant\frac 13$ is imposed purely for technical convenience. Indeed, if $(a_m)_{m=1}^\infty$ is any sequence of positive real numbers such that $\big(\frac m{a_m}\big)_{m=1}^\infty$ is unbounded, then the sequence $\Big(\frac m{\max\{\frac 13,a_m\}}\Big)_{m=1}^\infty$ remains unbounded. Furthermore, for $y\in\mathbb I^n$, we have $|y|\geqslant n-a_n$ if and only if $|y|\geqslant n-\max\{\frac 13,a_n\}$.
\end{remark}

\begin{corollary}\label{thm:wn_z_kontrprz_2}
    Let $r\in [1,\infty)$ and let $(b_n)_{n=1}^\infty$ be a sequence of positive real numbers such that $\big(\frac n{b_n}\big)_{n=1}^\infty$ is unbounded. Let us denote $E_n:=\{ y\in\mathbb I^n:|y|\geqslant n-b_n\}$ for $n\in\{1,2,3,...\}$. Then we have
    \begin{equation*}
        \sup_n \sup_{\begin{smallmatrix}
            f\in\mathbb C^{\mathbb I^n}\setminus\{ 0\}\\
            \widehat{\,f\,}\upharpoonright_{E_n}\equiv\, 0
        \end{smallmatrix}}\frac {\| V_r(S_kf:k\in\{0,...,n\})\|_2}{\|f\|_2} =\infty.
    \end{equation*}
\end{corollary}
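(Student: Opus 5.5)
We derive the corollary directly from Counterexample \ref{thm:kontrprz_2}, applied with a test function $\chi_y$ whose frequency $y$ lies just outside $E_n$. The key point is that $\widehat{\chi_y}$ is supported at the single point $y$. Hence $\chi_y$ is admissible in the inner supremum exactly when $y\notin E_n$, that is, when $|y|<n-b_n$.

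\textbf{Choice of the sequence.} We need a sequence $(a_n)$ with $a_n\geqslant\frac 13$, such that $(n/a_n)$ is unbounded and some $y$ with $n-a_n\leqslant|y|<n-b_n$ exists.
\begin{itemize}
\item Set $a_n:=\max\{\tfrac 13,b_n+1\}$.
\item For each $n$ with $b_n<n$, choose $y\in\mathbb I^n$ with $|y|=\lceil n-b_n\rceil-1$. This is the largest integer strictly smaller than $n-b_n$, and it is $\geqslant 0$ because $n-b_n>0$.
\item Then $|y|\geqslant n-b_n-1\geqslant n-a_n$, so Counterexample \ref{thm:kontrprz_2} applies to this $y$.
\item At the same time $|y|<n-b_n$, so $y\notin E_n$ and $\widehat{\chi_y}\upharpoonright_{E_n}\equiv 0$.
\end{itemize}

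\textbf{Checking the hypotheses.} It remains to see that $n/a_n$ is unbounded along a subsequence on which $b_n<n$.
\begin{itemize}
\item Since $(n/b_n)$ is unbounded, there are infinitely many $n$ with $n/b_n$ arbitrarily large. Along these $n$ we have $b_n<n$, so the construction above works.
\item Along the same $n$: if $b_n\geqslant 1$, then $a_n\leqslant 2b_n$; if $b_n<1$, then $a_n\leqslant 2$.
\item In either case $n/a_n\geqslant\min\{n/(2b_n),\,n/2\}$. Both quantities are unbounded along a suitable subsequence, using that $n\to\infty$ is forced whenever $n/b_n\to\infty$ with $b_n$ bounded below.
\end{itemize}
The one thing to handle with care is this last point. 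If the indices $n$ were to stay bounded, then $b_n\to 0$ along them. But $b_n$ is positive and takes only finitely many values on a bounded set of indices, so $n/b_n$ would then be bounded, a contradiction. Hence $n\to\infty$ along the subsequence.

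\textbf{Conclusion.} For the chosen $n$ and $y$, Counterexample \ref{thm:kontrprz_2} gives
\begin{equation*}
\frac{\|V_r(S_k\chi_y:k\in\{0,...,n\})\|_2}{\|\chi_y\|_2}\geqslant\tfrac 23\Big\lfloor\tfrac n{3a_n}\Big\rfloor^{\frac 1r}.
\end{equation*}
The right-hand side tends to infinity along the subsequence, so the double supremum in the corollary equals $\infty$.

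The only real obstacle is this bookkeeping: ensuring that a $y$ of the right length exists in the window $[n-a_n,\,n-b_n)$ and that enlarging $b_n$ to $a_n$ keeps $n/a_n$ unbounded. The analytic content is entirely in Counterexample \ref{thm:kontrprz_2}.
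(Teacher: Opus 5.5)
Your proof is correct and follows the paper's route. Like the paper, you apply Counterexample \ref{thm:kontrprz_2} to $\chi_y$ with $|y|=\lceil n-b_n\rceil-1$, which lies just outside $E_n$, along a subsequence on which $n/b_n\to\infty$. The only difference is your auxiliary sequence $a_n=b_n+1$ in place of the paper's $a_n=\sqrt{n\max\{\frac19,b_n\}}$: it makes $|y|\geqslant n-a_n$ automatic, so you avoid the paper's eventual condition $a_{n_l}-d_{n_l}\geqslant 1$, and it even gives the larger lower bound $\frac23\lfloor n/(3(b_n+1))\rfloor^{1/r}$ instead of $\frac23\lfloor\frac13\sqrt{n/\max\{\frac19,b_n\}}\rfloor^{1/r}$.
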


\begin{proof}
    Let us denote $d_n:=\max\{\frac 19,b_n\}$. Then we have
    \begin{equation}\label{eq42}
        E_n=\{ y\in\mathbb I^n:|y|\geqslant n-d_n\}.
    \end{equation}
    Let us denote $a_n:=\sqrt{nd_n}$. Then $a_n\geqslant\sqrt{d_n}\geqslant\frac 13$ so by Counterexample \ref{thm:kontrprz_2} we get
    \begin{equation}\label{eq43}
        \frac{\|V_r(S_k\chi_y:k\in\{0,...,n\})\|_2}{\|\chi_y\|_2}\geqslant \tfrac 23\Big\lfloor\tfrac n{3a_n}\Big\rfloor^{\frac 1r}\qquad\text{for }|y|\geqslant n-a_n.
    \end{equation}
    Since the sequence $\big(\frac n{b_n}\big)_{n=1}^\infty$ is unbounded, it has a subsequence $\Big(\frac{n_l}{b_{n_l}}\Big)_{l=1}^\infty$ diverging to $\infty$. Then
    \begin{equation}\label{eq46}
        \frac{n_l}{d_{n_l}}=\frac{n_l}{\max\{\frac 19,b_{n_l}\}}=\min\left\{ 9n_l,\frac{n_l}{b_{n_l}} \right\}\xrightarrow[l\to\infty]{}\infty.
    \end{equation}
    Next, since $\frac{a_{n_l}}{d_{n_l}}=\sqrt{\frac{n_l}{d_{n_l}}} \xrightarrow[l\to\infty]{}\infty$, there exists $l_0$ such that for $l\geqslant l_0$ we have $\frac{a_{n_l}}{d_{n_l}}\geqslant 10$, and thus
    \begin{equation}\label{eq44}
        a_{n_l}-d_{n_l}=\left( \frac{a_{n_l}}{d_{n_l}}-1 \right) d_{n_l}\geqslant (10-1)\cdot\frac 19=1.
    \end{equation}
    Since $n_l-d_{n_l}=n_l\Big( 1-\frac{d_{n_l}}{n_l}\Big) \xrightarrow[l\to\infty]{}\infty$, there is $l_1$ such that for $l\geqslant l_1$ we have $\big\lceil n_l-d_{n_l}\big\rceil -1\geqslant 0$.\\
    Now for $l\geqslant\max\{l_0,l_1\}$ let us choose $y_l\in\mathbb I^{n_l}$ satisfying $|y_l|= \big\lceil n_l-d_{n_l}\big\rceil -1$. By \eqref{eq44} we get $|y_l|\geqslant n_l-d_{n_l}-1\geqslant n_l-a_{n_l}$, so by \eqref{eq43} we obtain
    \begin{equation}\label{eq45}
        \frac{\|V_r(S_k\chi_{y_l}:k\in\{0,...,n_l\})\|_2}{\|\chi_{y_l}\|_2}\geqslant \frac 23\left\lfloor\frac{n_l}{3a_{n_l}}\right\rfloor^{\frac 1r}.
    \end{equation}
    On the other hand we have $|y_l|<n_l-d_{n_l}$ so $y_l\not\in E_{n_l}$ by \eqref{eq42}. Hence by \eqref{eq01} we get $\widehat{\chi_{y_l}}\!\!\upharpoonright_{E_{n_l}}\equiv 0$ and therefore by \eqref{eq45} we obtain
    \begin{equation*}
        \begin{split}
            \sup_{\begin{smallmatrix}
                f\in\mathbb C^{\mathbb I^{n_l}}\setminus\{ 0\}\\
                \widehat{\,f\,}\upharpoonright_{E_{n_l}}\equiv\, 0
            \end{smallmatrix}}\frac {\| V_r(S_kf:k\in\{0,...,n_l\})\|_2}{\|f\|_2}&\geqslant \frac{\|V_r(S_k\chi_{y_l}:k\in\{0,...,n_l\})\|_2}{\|\chi_{y_l}\|_2}\geqslant \frac 23\left\lfloor\frac{n_l}{3a_{n_l}}\right\rfloor^{\frac 1r}\\
            &= \frac 23\left\lfloor\frac 13\sqrt{\frac{n_l}{d_{n_l}}}\right\rfloor^{\frac 1r} \xrightarrow[l\to\infty]{}\infty,
        \end{split}
    \end{equation*}
    where the divergence to $\infty$ follows from \eqref{eq46}. This implies the desired equality.
\end{proof}

\section{The main result}

The main result of this paper is the following theorem:

\begin{theorem}\label{thm:glowne_tw}
    Let $r\in (2,\infty)$ and $q\in\{0,1\}$. For any function $f:\mathbb I^n\to\mathbb C$ there holds the estimate
    \begin{equation*}
        \|V_r(S_kf:k\in (2\mathbb Z+q)\cap\{0,...,n\})\|_2\leqslant C_r\|f\|_2,
    \end{equation*}
    where $C_r>0$ is a constant dependent only on $r$ (in particular independent of the dimension $n$).
\end{theorem}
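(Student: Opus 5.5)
We plan to argue on the Fourier side and reduce everything to small frequencies. By \eqref{eq47}, $S_kf=\sum_y \widehat f(y)\kappa^{(n)}_k(|y|)\widetilde{\chi_y}$. Split $f=f_1+f_2$ with $\widehat{f_1}$ supported on $\{|y|\leqslant n/2\}$ and $\widehat{f_2}$ on $\{|y|>n/2\}$. Write $k=2m+q$ and take $y$ with $|y|>n/2$. Fact \ref{thm:podst_wlasn_krawtch}(4) gives $\kappa^{(n)}_{2m+q}(|y|)=(-1)^q\kappa^{(n)}_{2m+q}(n-|y|)$. Conjugating by the character $\chi_{\mathbf 1_n}$ (multiplication by $(-1)^{|x|}$, an isometry of $\ell^2$ that maps $\widetilde{\chi_y}$ to $\widetilde{\chi_{y\oplus\mathbf 1_n}}$) therefore converts the fixed-parity family acting on $f_2$ into $\pm$ the same family acting on a function whose spectrum lies in $\{|y|<n/2\}$. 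By \eqref{eq05} it thus suffices to prove Proposition \ref{thm:stw_1}, i.e. \eqref{eq40}, for the subsequence $k\in(2\mathbb Z+q)$. The reflection only works because the sign $(-1)^k$ is constant along a fixed parity, which is exactly what fails in Counterexample \ref{thm:kontrprz_1}.

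Now assume $\widehat f$ is supported on $|y|\leqslant n/2$. We split the radii into $k\leqslant n/2$ and $k>n/2$. For $k>n/2$ we use symmetry (Fact \ref{thm:podst_wlasn_krawtch}(3),(4)): $\kappa_k(x)=(-1)^x\kappa_{n-k}(x)$. Since $(-1)^{|y|}$ is independent of $k$ it is a unitary multiplier, so this half reduces to radii $n-k<n/2$. The junction between the two halves costs only a single jump, bounded by $2\max_k|S_kf|$. That maximal function is controlled by the dimension-free maximal theorem of \cite{HKS}, or more simply by the comparison argument below.

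For $k\leqslant n/2$ we compare $S_k$ with the noise semigroup $N_{1/k}$ (and $N_0=I$ for $k=0$). Write $S_kf=N_{1/k}f+(S_k-N_{1/k})f$. The variation of $t\mapsto N_tf$ is bounded dimension-free on $\ell^2$ for $r>2$ by the variational theorem for symmetric diffusion semigroups (Lemma \ref{thm:Nt_sym_polgr_dyfuzji} and Stein/Le Merdy--Xu), and by \eqref{eq23} this covers the reindexing $k\mapsto 1/k$. For the error we apply \eqref{eq08} with dyadic blocks. The long-jump part is controlled by the square function $\big(\sum_{l}|(S_{2^l}-N_{2^{-l}})f|^2\big)^{1/2}$, via \eqref{eq06} and $r\geqslant 2$. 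By Plancherel this square function reduces to the pointwise bound $\sum_l|\kappa_{2^l}(x)-e^{-x2^{-l}}|^2\lesssim1$ for $x\leqslant n/2$. We obtain it from Lemma \ref{thm:szac_dla_krawtch}(a) together with $|1-e^{-s}|\leqslant s$ when $2^lx\leqslant n$, and from Lemma \ref{thm:kappa_jak_e} plus the decay of $e^{-x2^{-l}}$ in the remaining ranges. The splitting point is $2^l\approx n/x$ for $\kappa$ versus $2^l\approx x$ for the exponential, and we need to check these two regimes are compatible.

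The short variations inside $[2^l,2^{l+1})$ are the main obstacle. Within a block we use \eqref{eq07} to pass to $V_2$, then Lemma \ref{thm:usuniecie_poln_war} with $s=2$. The resulting differences over dyadic subintervals of length $2^g$ are estimated in $\ell^2$ by Plancherel. This requires bounding $|\kappa_{k}(x)-\kappa_{k+2^g}(x)|$. Lemma \ref{thm:szac_dla_krawtch}(c), telescoped, gives $\lesssim 2^g/2^l$. In addition, both terms are small, $\lesssim n/(2^lx)$ by (b), and for $2^lx\leqslant n$ we have the bound $2^gx/n$ via Lemma \ref{thm:roznica_krawtch} and (a). Summing over $h\leqslant 2^{l-g}$ and over $g$, the estimates must be interpolated, e.g. as $\min(1,2^lx/n)\min(1,n/(2^lx))$ times a factor $2^{g-l}$. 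The aim is a bound of the form $2^{-\epsilon(l-g)}$ times something summable in $l$ for each fixed $x$. This yields $\sum_l V_2(\cdots)^2\lesssim\|f\|_2^2$, and is where I expect the delicate bookkeeping (as in \cite{BMSW}) to lie.
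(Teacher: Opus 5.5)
The reductions in your proposal are the paper's own: multiplying $f_2$ by $\chi_{\mathbf 1_n}$ (harmless because $(-1)^k$ is constant on a parity class), and folding radii $k>\frac n2$ via $\kappa^{(n)}_k(x)=(-1)^x\kappa^{(n)}_{n-k}(x)$. Your short-variation scheme is also the paper's. The genuine gap is in the long-jump part: you compare $S_k$ with $N_{1/k}$, and that comparison fails.

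\textbf{Why $N_{1/k}$ fails.} The multiplier of $N_{1/k}$ on $\chi_y$ is $e^{-|y|/k}$. As $k$ grows, $\kappa^{(n)}_k(x)$ decreases from $1$ towards $0$, with its transition at $k\approx n/x$. By contrast, $e^{-x/k}$ increases from $e^{-x}$ towards $1$, with its transition at $k\approx x$. The incompatibility you flagged is therefore real and fatal.

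Concretely, suppose $2^l\leqslant\min\{x,\frac n{4x}\}$. Then Lemma \ref{thm:szac_dla_krawtch}(a) gives $\kappa^{(n)}_{2^l}(x)\geqslant\frac12$, while $e^{-x2^{-l}}\leqslant e^{-1}$. Take $x=\lfloor\frac12\sqrt n\rfloor$. About $\frac12\log_2 n$ dyadic scales then satisfy $|\kappa^{(n)}_{2^l}(x)-e^{-x2^{-l}}|\geqslant\frac12-e^{-1}$, so $\sum_l|\kappa^{(n)}_{2^l}(x)-e^{-x2^{-l}}|^2\gtrsim\log n$. Testing on $f=\chi_y$ with $|y|=x$ gives $\|(\sum_l|(S_{2^l}-N_{2^{-l}})f|^r)^{1/r}\|_2\gtrsim(\log n)^{1/r}\|f\|_2$.

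So the error term, estimated through \eqref{eq06} as you propose, is not dimension-free. $S_{2^l}$ and $N_{2^{-l}}$ are simply not close at these scales, and the comparison gains nothing. Your convention $N_0=I$ at $k=0$ is another symptom: with $t=1/k$, the radius $k=0$ corresponds to $t=\infty$, not $t=0$.

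\textbf{The repair.} Make the semigroup time linear in the radius, $t=k/n$, as the paper does. Then $e^{-kx/n}$ has the same transition point $k\approx n/x$ as $\kappa^{(n)}_k(x)$, and you get two bounds:
\begin{itemize}
\item Lemma \ref{thm:szac_dla_krawtch}(a) with $1-e^{-s}\leqslant s$ gives $|\kappa^{(n)}_k(x)-e^{-kx/n}|\lesssim\frac{kx}n$;
\item Lemma \ref{thm:szac_dla_krawtch}(b) with $e^{-s}\leqslant\frac1s$ gives $|\kappa^{(n)}_k(x)-e^{-kx/n}|\lesssim\frac n{kx}$.
\end{itemize}
By \eqref{eq32} the dyadic sum of squares is then $\lesssim1$ uniformly in $x\leqslant\frac n2$. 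In addition, $k\mapsto k/n$ is nondecreasing, so \eqref{eq23} applies directly, and $N_0=I=S_0$ is consistent.

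\textbf{Two smaller remarks.}
\begin{itemize}
\item Short-variation bookkeeping. Let $D=|\kappa^{(n)}_k(x)-\kappa^{(n)}_{k+2^g}(x)|$ with $2^l\leqslant k<2^{l+1}$, and set $a=2^lx/n$. When $a>1$, the bounds you list give only $D\lesssim\min\{2^{g-l},a^{-1}\}$, not the product $2^{g-l}a^{-1}$. This does not hurt: from $D\lesssim2^{g-l}$ and $D\lesssim\min\{a,a^{-1}\}$ you get $D^2\lesssim2^{-\frac74(l-g)}\min\{a,a^{-1}\}^{1/4}$. Against the Cauchy--Schwarz weight $2^{\frac12(l-g)}$ this suffices, using only Lemma \ref{thm:szac_dla_krawtch}.
\item The junction between the two halves needs no maximal theorem. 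Insert $S_{\lfloor n/2\rfloor}$ and $S_{\lfloor n/2\rfloor+1}$ and apply Lemma \ref{thm:Sk_kontrakcja}.
\end{itemize}
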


In order to prove the theorem above, it suffices to show the following

\begin{proposition}\label{thm:stw_1}
    Let $r\in (2,\infty)$. For any function $f:\mathbb I^n\to\mathbb C$ satisfying $\widehat f(y)=0$ for $|y|>\frac n2$ there holds
    \begin{equation*}
        \|V_r(S_kf:k\in\{0,...,n\})\|_2\lesssim_r\|f\|_2.
    \end{equation*}
\end{proposition}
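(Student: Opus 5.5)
Writing $f=\sum_{|y|\leqslant n/2}\widehat f(y)\widetilde{\chi_y}$, we have $S_kf=\sum_y \kappa^{(n)}_k(|y|)\widehat f(y)\widetilde{\chi_y}$. The plan follows the strategy of \cite{BMSW}: compare the spherical means with the noise semigroup, splitting $V_r$ into long and short variations. We use \eqref{eq08} with $Z=\{1,\dots,n\}$ (the term $k=0$ costs only $|f|+|S_1f|$, which is harmless). This gives
\begin{equation*}
V_r(S_kf:k\in\{0,\dots,n\})\lesssim V_r(S_kf:k\in\mathbb D\cap Z)+\Big(\sum_{k\in\mathbb D}V_r(S_jf:j\in[k,2k)\cap Z)^r\Big)^{1/r}+|f|+|S_1f|.
\end{equation*}

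\textbf{Long variations.} We write
\begin{equation*}
S_{2^l}f=N_{2^l/n}f+(S_{2^l}-N_{2^l/n})f.
\end{equation*}
For the semigroup part, $V_r(N_tf:t>0)$ is bounded on $\ell^2$ for $r>2$ with a constant depending only on $r$. This holds for any symmetric diffusion semigroup, by Lemma~\ref{thm:Nt_sym_polgr_dyfuzji} together with the variational bounds of Jones--Kaufman--Rosenblatt--Wierdl / Le Merdy--Xu, or via Stein's theory on $\ell^2$ using the spectral theorem. By \eqref{eq23}, restricting to $t\in\{2^l/n\}$ does not increase the variation.

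For the error part we use \eqref{eq06} with $r\geqslant 2$ and Plancherel, which reduces it to the square function estimate
\begin{equation*}
\sup_{x\leqslant n/2}\sum_{l:\,2^l\leqslant n}|\kappa^{(n)}_{2^l}(x)-e^{-2^lx/n}|^2\lesssim 1 .
\end{equation*}
We prove this by splitting into two ranges.
\begin{itemize}
\item When $2^lx\leqslant n$, Lemma~\ref{thm:szac_dla_krawtch}(a) together with $|1-e^{-s}|\leqslant s$ bounds the term by $\lesssim 2^lx/n$.
\item When $2^lx>n$, the term is bounded by $|\kappa|+e^{-2^lx/n}$. If $2^l\leqslant n/2$, Lemma~\ref{thm:kappa_jak_e} gives $|\kappa|\leqslant e^{-c2^lx/n}$. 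The scales $2^l>n/2$ are at most $O(1)$ many, and each is bounded by $2$.
\end{itemize}
Both ranges give geometric sums, so the long variation is controlled.

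\textbf{Short variations.} For each dyadic block we use the crude bound $V_r\leqslant V_2$ from \eqref{eq07}. We then apply Lemma~\ref{thm:usuniecie_poln_war} with $s=2$ on the block $[k,2k)$, shifted to $\{0,\dots,k\}$ with $k=2^l$, giving
\begin{equation*}
V_2(S_jf:j\in[k,2k))\lesssim\sum_{g=0}^{l}\Big(\sum_h|S_{k+(h-1)2^g}f-S_{k+h2^g}f|^2\Big)^{1/2}.
\end{equation*}
Raising this to the power $r$ is problematic. Since $r\geqslant2$, the $\ell^r$ sum over $k$ is dominated by the $\ell^2$ sum, and the Minkowski sum over $g$ is kept outside. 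Taking $\ell^2(\mathbb I^n)$ norms and Plancherel, it therefore suffices to show
\begin{equation*}
\sum_{g}\Big(\sup_{x\leqslant n/2}\sum_{l\geqslant g}\ \sum_{h\leqslant 2^{l-g}}|\kappa^{(n)}_{2^l+(h-1)2^g}(x)-\kappa^{(n)}_{2^l+h2^g}(x)|^2\Big)^{1/2}\lesssim 1 .
\end{equation*}
We estimate each difference in two ways.
\begin{itemize}
\item Telescoping Lemma~\ref{thm:szac_dla_krawtch}(c), or Lemma~\ref{thm:roznica_krawtch} with symmetry, gives the bound $\min\{2^g x/n,\ 2^g/2^l\}$.
\item Lemma~\ref{thm:kappa_jak_e} gives the bound $e^{-c2^lx/n}$.
\end{itemize}
Combining these, the inner sum over $h$ is at most $2^{l-g}\min\{(2^gx/n)^2,e^{-c2^lx/n}\}$, roughly $2^{g-l}(2^lx/n)^2\min\{1,\dots\}$. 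Summing over $l$ should give $\lesssim 2^{-\epsilon(l_x-g)}$-type decay, where $2^{l_x}\approx n/x$. This yields summability in $g$ for fixed $x$, but we need it uniformly in $x$, and this is the main obstacle. A uniform bound over $x$ inside a sum over $g$ is too lossy, so we would first move the sum over $g$ inside via Cauchy--Schwarz with a weight $2^{\epsilon|g-l_x|}$. That costs nothing after summing, because the gain is geometric in $|g-l_x|$.

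The radii above $n/2$ need separate care, since Lemma~\ref{thm:kappa_jak_e} only covers $k\leqslant n/2$. We would handle them using Fact~\ref{thm:podst_wlasn_krawtch}(3),(4) to reflect $k\mapsto n-k$, picking up the sign $(-1)^x$. The point to check is whether $|x|\leqslant n/2$ still gives the needed decay there, perhaps via $\kappa^{(n)}_k(x)=\kappa^{(n)}_x(k)$ and a bound for $x$ small. If consecutive $k$ near $n$ with $x$ fixed turn out to behave badly, we would restrict to that range and use the exponential bound directly. This is where the hypothesis $\widehat f=0$ on $|y|>n/2$ is used.
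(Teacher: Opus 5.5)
Your long-variation step is correct and is the paper's argument. The short-variation step, however, has a genuine gap. You keep the Minkowski sum over the \emph{absolute} step size $2^g$ outside and the supremum over $x=|y|$ inside, and with that arrangement the sufficient condition you display is false, not merely hard. Suppose $2^g\ll n$ and $x\approx n/2^{g+3}$. Then Lemma~\ref{thm:szac_dla_krawtch}(a) gives $\kappa^{(n)}_{2^g}(x)\geqslant\frac34$, and Lemma~\ref{thm:kappa_jak_e} gives $|\kappa^{(n)}_{C2^g}(x)|\leqslant\frac14$ for a large absolute $C$. Cauchy--Schwarz along the chain $2^g,2\cdot 2^g,\dots,C2^g$ then makes the inner supremum $\gtrsim 1/C$ for each of $\sim\log n$ values of $g$.

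Your proposed repair, a weight $2^{\epsilon|g-l_x|}$, is not admissible. That Cauchy--Schwarz has to be done at each point $z\in\mathbb I^n$ before Plancherel, where $S_kf(z)$ mixes all frequencies at once. So the weights may depend on $(l,g)$ but never on $|y|$, and by the lower bound above, weights depending on $g$ alone cannot work either.

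The missing idea is to measure depth \emph{relative to the block}. You actually wrote down the right bound, $2^g/2^l$ from telescoping Lemma~\ref{thm:szac_dla_krawtch}(c) over $k\geqslant 2^l$, and then dropped it. That bound is uniform in $x$ and decays in $l-g$. The paper uses steps $2^{l-g}$ inside $[2^l,2^{l+1})$ and interpolates this bound with $\min\{2^lx/n,\,n/(2^lx)\}$ from (a),(b). This gives a squared difference $\lesssim\min\{2^lx/n,n/(2^lx)\}^{1/4}2^{-7g/4}$. Cauchy--Schwarz with the $x$-independent weights $2^{\pm g/4}$ then yields $\Psi(x)\lesssim\sum_l\sum_g 2^{-g/4}\min\{2^lx/n,n/(2^lx)\}^{1/4}\lesssim 1$ by \eqref{eq32}.

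The radii above $n/2$ are a second gap, and fixing them is more than a matter of checking. With $Z=\{1,\dots,n\}$ in \eqref{eq08}, the top block $[2^L,n]$ (always $2^L>n/2$) and part of the preceding block enter the dyadic machinery. There, Lemma~\ref{thm:kappa_jak_e} and Lemma~\ref{thm:szac_dla_krawtch}(b),(c) are unavailable. There is also no exponential bound to fall back on, since $|\kappa^{(n)}_k(x)|=|\kappa^{(n)}_{n-k}(x)|$ is close to $1$ for $k$ near $n$ when $x$ is small.

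Reflecting only the multipliers does not help, because the dyadic grid in $k$ is misaligned with the natural scale $n-k$. Take $n=3\cdot 2^{L-1}$ and $|y|=n/2$. At every level $g\leqslant L-1$, the last interval of Lemma~\ref{thm:usuniecie_poln_war} in the top block is $[n-2^g,n]$. Its jump is $|1-\kappa^{(n)}_{2^g}(\frac n2)|\geqslant\frac12$, since $|\kappa^{(n)}_m(\frac n2)|\leqslant\frac1{n-1}$ for $1\leqslant m\leqslant\frac n2$. So for $f=\chi_y$ the right-hand side of that lemma is already $\gtrsim\log n$, whatever you do afterwards, even though the true variation there is $O(1)$.

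The paper avoids this in two ways. First, it splits at $\lfloor n/2\rfloor$ before any dyadic decomposition, paying only the endpoint terms $|S_kf|$ for $k\in\{0,1,\lfloor n/2\rfloor,\lfloor n/2\rfloor+1,n-1,n\}$. Second, it uses the operator identity $S_kf(z\oplus\mathbf 1_n)=S_{n-k}f(z)$. After the measure-preserving substitution $z\mapsto z\oplus\mathbf 1_n$, this identity maps the variation over $\{\lfloor n/2\rfloor+1,\dots,n-1\}$ onto one over $\{1,\dots,\lfloor n/2\rfloor\}$, and only that range goes through the long/short decomposition.

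Note also that the hypothesis $\widehat f=0$ for $|y|>\frac n2$ is used throughout, not only for the large radii: every Krawtchouk estimate you invoke requires $x\leqslant n/2$.
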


\begin{proof}[Proof of the implication "Proposition \ref{thm:stw_1} $\Rightarrow$ Theorem \ref{thm:glowne_tw}"]\phantom{a}\\
    Let us take any $r\!\in\! (2,\infty),\;\; q\!\in\!\{0,1\}$, and any function $f:\mathbb I^n\to\mathbb C$. Let us denote
    \begin{equation*}
        f_1:=\sum_{\begin{smallmatrix}
            y\in\mathbb I^n\\
            |y|\leqslant\frac n2
        \end{smallmatrix}}\widehat f(y)\widetilde{\chi_y}\quad\text{and}\quad f_2:=\sum_{\begin{smallmatrix}
            y\in\mathbb I^n\\
            |y|>\frac n2
        \end{smallmatrix}}\widehat f(y)\widetilde{\chi_y}.
    \end{equation*}
    Obviously by \eqref{eq01} we have $f=f_1+f_2$. Let us take any $x\in\mathbb I^n$. By the triangle inequality \eqref{eq05} we get
    \begin{equation}\label{eq13}
        \begin{split}
            V_r&(S_kf(x):k\in (2\mathbb Z+q)\cap\{0,...,n\})^2\\
            &\leqslant\Big( V_r(S_kf_1(x):k\in (2\mathbb Z+q)\cap\{0,...,n\})+ V_r(S_kf_2(x):k\in (2\mathbb Z+q)\cap\{0,...,n\})\Big)^2\\
            &\lesssim V_r(S_kf_1(x):k\in (2\mathbb Z+q)\cap\{0,...,n\})^2+ V_r(S_kf_2(x):k\in (2\mathbb Z+q)\cap\{0,...,n\})^2.
        \end{split}
    \end{equation}
    Let us recall that $\mathbf 1_n$ denotes the element of $\mathbb I^n$ consisting of only ones. Let us define the linear operator $\theta$ on the space $\mathbb C^{\mathbb I^n}$ by
    \begin{equation*}
        \theta\chi_y:=\chi_{y\oplus \mathbf 1_n}\qquad\text{for }y\in\mathbb I^n.
    \end{equation*}
    For every $k\in\{0,...,n\}$ and $y,z\in\mathbb I^n$ we have
    \begin{equation*}
        \begin{split}
            &\chi_{y\oplus \mathbf 1_n}(z)=(-1)^{z\cdot (y\oplus \mathbf 1_n)}=(-1)^{z\cdot y+z\cdot \mathbf 1_n}=\chi_y(z)(-1)^{|z|},\\
            &S_k\chi_y(z)=\kappa^{(n)}_k(|y|)\chi_y(z)=(-1)^k\kappa^{(n)}_k(n-|y|)\chi_{y\oplus \mathbf 1_n}(z)(-1)^{|z|}\\
            &\qquad\quad\:\, =(-1)^{k+|z|}\kappa^{(n)}_k(|y\oplus \mathbf 1_n|)\chi_{y\oplus \mathbf 1_n}(z)=(-1)^{k+|z|}S_k\chi_{y\oplus \mathbf 1_n}(z) =(-1)^{k+|z|}S_k\theta\chi_y(z).
        \end{split}
    \end{equation*}
    Hence for every $k\in\{0,...,n\},\;\; g\in\mathbb C^{\mathbb I^n}$, and $z\in\mathbb I^n$ we get
    \begin{equation}\label{eq14}
        S_kg(z)=(-1)^{k+|z|}S_k\theta g(z).
    \end{equation}
    Let us take any finite sequence $k_0\!<\!...\!<\!k_J,\;\; k_j\in (2\mathbb Z+q)\cap\{0,...,n\}$. For every $j\in\{1,...,J\}$, by \eqref{eq14} we get
    \begin{equation*}
        \begin{split}
            |S_{k_{j-1}}&f_2(x)-S_{k_j}f_2(x)| =|(-1)^{k_{j-1}+|x|}S_{k_{j-1}}\theta f_2(x)-(-1)^{k_j+|x|}S_{k_j}\theta f_2(x)|\\
            &= |(-1)^{q+|x|}S_{k_{j-1}}\theta f_2(x)-(-1)^{q+|x|}S_{k_j}\theta f_2(x)| = |S_{k_{j-1}}\theta f_2(x)-S_{k_j}\theta f_2(x)|.
        \end{split}
    \end{equation*}
    Hence
    \begin{equation*}
        V_r(S_kf_2(x):k\in (2\mathbb Z+q)\cap\{0,...,n\})= V_r(S_k\theta f_2(x):k\in (2\mathbb Z+q)\cap\{0,...,n\}).
    \end{equation*}
    Combining the equality above with \eqref{eq13} we obtain
    \begin{equation*}
        \begin{split}
            V_r(S_k&f(x):k\in (2\mathbb Z+q)\cap\{0,...,n\})^2\\
            &\lesssim V_r(S_kf_1(x):k\in (2\mathbb Z+q)\cap\{0,...,n\})^2+ V_r(S_k\theta f_2(x):k\in (2\mathbb Z+q)\cap\{0,...,n\})^2\\
            &\leqslant V_r(S_kf_1(x):k\in \{0,...,n\})^2+ V_r(S_k\theta f_2(x):k\in \{0,...,n\})^2.
        \end{split}
    \end{equation*}
    Hence
    \begin{equation}\label{eq15}
        \begin{split}
            \|V_r(S_kf&:k\in (2\mathbb Z+q)\cap\{0,...,n\})\|_2^2=\sum_{x\in\mathbb I^n}V_r(S_kf(x):k\in (2\mathbb Z+q)\cap\{0,...,n\})^2\\
            &\lesssim\sum_{x\in\mathbb I^n}V_r(S_kf_1(x):k\in\{0,...,n\})^2+ \sum_{x\in\mathbb I^n}V_r(S_k\theta f_2(x):k\in\{0,...,n\})^2\\
            &=\| V_r(S_kf_1:k\in\{0,...,n\})\|_2^2+ \| V_r(S_k\theta f_2:k\in\{0,...,n\})\|_2^2.
        \end{split}
    \end{equation}
    Both functions $f_1$ and $\theta f_2$ have Fourier transform vanishing for arguments $y$ satisfying $|y|>\frac n2$ (this is because
    \begin{equation*}
        \theta f_2=\sum_{\begin{smallmatrix}
            y\in\mathbb I^n\\
            |y|>\frac n2
        \end{smallmatrix}} \widehat f(y)\theta\widetilde{\chi_y}=\sum_{\begin{smallmatrix}
            z\in\mathbb I^n\\
            |z|<\frac n2
        \end{smallmatrix}} \widehat f(z\oplus \mathbf 1_n)\widetilde{\chi_z}\;\;\; ).
    \end{equation*}
    Thus, assuming Proposition \ref{thm:stw_1}, by \eqref{eq15} we get
    \begin{equation*}
        \begin{split}
            \|V_r(S_kf:k\in (2\mathbb Z+q)\cap\{0,...,n\})\|_2^2 &\lesssim_r\|f_1\|_2^2+\|\theta f_2\|_2^2 =\sum_{\begin{smallmatrix}
                y\in\mathbb I^n\\
                |y|\leqslant\frac n2
            \end{smallmatrix}} |\widehat f(y)|^2+ \sum_{\begin{smallmatrix}
                y\in\mathbb I^n\\
                |y|>\frac n2
            \end{smallmatrix}} |\widehat f(y)|^2.
        \end{split}
    \end{equation*}
    Finally by Plancherel's equality \eqref{eq02} we obtain
    \begin{equation*}
        \sum_{\begin{smallmatrix}
                y\in\mathbb I^n\\
                |y|\leqslant\frac n2
            \end{smallmatrix}} |\widehat f(y)|^2+ \sum_{\begin{smallmatrix}
                y\in\mathbb I^n\\
                |y|>\frac n2
            \end{smallmatrix}} |\widehat f(y)|^2=\|f\|_2^2.
    \end{equation*}
\end{proof}

In order to prove Proposition \ref{thm:stw_1}, it suffices to show the following

\begin{proposition}\label{thm:stw_2}
    Let $r\in (2,\infty)$. For any function $f:\mathbb I^n\to\mathbb C$ satisfying $\widehat f(y)=0$ for $|y|>\frac n2$ there holds
    \begin{equation*}
        \|V_r(S_kf:k\in\{1,...,\big\lfloor\tfrac n2\big\rfloor\})\|_2\lesssim_r\|f\|_2.
    \end{equation*}
\end{proposition}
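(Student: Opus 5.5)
Following the strategy of \cite{BMSW}, I would compare the spherical means $S_k$ with the noise semigroup $N_{1/k}$ and split via the dyadic decomposition \eqref{eq08}. Writing $Z:=\{1,\dots,\lfloor\frac n2\rfloor\}$, which satisfies $\lfloor t\rfloor_D\in Z$ for $t\in Z$, \eqref{eq08} reduces the claim to two estimates: a long variation bound
\begin{equation*}
\|V_r(S_kf:k\in\mathbb D\cap Z)\|_2\lesssim_r\|f\|_2,
\end{equation*}
and a short variation bound
\begin{equation*}
\Big\|\Big(\sum_{m\in\mathbb D}V_r(S_kf:k\in[m,2m)\cap Z)^r\Big)^{1/r}\Big\|_2\lesssim\|f\|_2 .
\end{equation*}
Throughout I use Plancherel \eqref{eq02} and \eqref{eq47}: for $f$ with $\widehat f(y)=0$ when $|y|>\frac n2$, every multiplier is $\kappa^{(n)}_k(|y|)$ with $k,|y|\leqslant\frac n2$, so Lemma \ref{thm:szac_dla_krawtch} applies.

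For the long variation I write $S_kf=N_{1/k}f+(S_k-N_{1/k})f$ for $k\in\mathbb D\cap Z$. The semigroup part is handled by the variational inequality for symmetric diffusion semigroups (Stein's/Le Merdy--Xu's $r$-variation bound for $r>2$ on $L^2$), which applies by Lemma \ref{thm:Nt_sym_polgr_dyfuzji} and \eqref{eq23}; this is the only place where $r>2$ is needed. For the error part I bound $V_r\leqslant V_2$ by \eqref{eq07} and then by \eqref{eq06} by a square function, so it suffices to show
\begin{equation*}
\sup_{x\leqslant n/2}\sum_{k\in\mathbb D}|\kappa^{(n)}_k(x)-e^{-x/k}|^2\lesssim 1 .
\end{equation*}
For $kx\leqslant n$, Lemma \ref{thm:szac_dla_krawtch}(a) gives $|\kappa_k-1|\lesssim kx/n$, while $|1-e^{-x/k}|\leqslant x/k$. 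For large $kx/n$, Lemma \ref{thm:szac_dla_krawtch}(b) gives $|\kappa_k|\lesssim n/(kx)$, and $e^{-x/k}$ is small when $k\ll x$. This is where I see the difficulty: since $x\leqslant n/2$, $kx/n\leqslant k/2$, and the regime $n/x\lesssim k\lesssim x$ may contain many dyadic $k$ for which neither bound makes the difference small. I would therefore replace $N_{1/k}$ by $N_{t_k}$ with $t_k$ matched to the actual decay scale $kx/n$, for example using $M_{t}$ from Lemma \ref{thm:inny_wzor_na_Nt} with $u_t\approx k/n$, i.e.\ $t_k\approx 2k/n$, so that the multiplier is $e^{-t_kx}\approx e^{-2kx/n}$. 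Then the two regimes $kx\lesssim n$ (both terms close to $1$, with error $\lesssim kx/n$) and $kx\gtrsim n$ (both small, $\lesssim n/(kx)$) give geometric sums over dyadic $k$, and $k\mapsto t_k$ is monotone, so \eqref{eq23} still applies to the semigroup part.

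For the short variations I apply Lemma \ref{thm:usuniecie_poln_war} with $s=2$ on each block $[m,2m)$, after shifting indices. This bounds $V_2(S_kf:k\in[m,2m))$ by
\begin{equation*}
\sum_{g}\Big(\sum_h|S_{m+(h-1)2^g}f-S_{m+h2^g}f|^2\Big)^{1/2}.
\end{equation*}
After using $\ell^r\subset\ell^2$ and taking $L^2$ norms with Plancherel, it remains to bound, uniformly in $x\leqslant n/2$,
\begin{equation*}
\sum_m\sum_h|\kappa_{m+(h-1)2^g}(x)-\kappa_{m+h2^g}(x)|^2 .
\end{equation*}
Summing Lemma \ref{thm:szac_dla_krawtch}(c) gives each increment $\lesssim 2^g/m$, and from (a)/(b) each increment is also $\lesssim\min(mx/n,\,n/(mx))$. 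Interpolating these yields a bound like
\begin{equation*}
2^{g-l}\min\big(mx/n,\,n/(mx)\big)^{1-\epsilon}
\end{equation*}
with $m=2^l$. Summing over the $2^{l-g}$ values of $h$, then over $g\leqslant l$ (a geometric series in $2^{(g-l)/2}$ after taking square roots), and finally over dyadic $m$ should give an absolute constant. The main obstacle is getting the long-variation comparison multiplier right, i.e.\ that the chosen $t_k$ genuinely makes $\sum_k|\kappa_k(x)-e^{-t_kx}|^2$ bounded uniformly in $x$ and $n$.
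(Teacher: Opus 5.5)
\textbf{Verdict.} Your route is the paper's, but one step in the short-variation estimate does not close as written; the repair is easy and stays within your framework.

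\textbf{Overall structure.} You use the same ingredients as the paper: the splitting \eqref{eq08}; for the dyadic part, comparison with the noise semigroup (semigroup $r$-variation for $r>2$) plus a square function for the error; for the short variations, $V_r\leqslant V_2$, $\ell^r\subset\ell^2$, Lemma~\ref{thm:usuniecie_poln_war} and Plancherel.

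\textbf{Long variations.} The ``obstacle'' you flag disappears once the time scale is fixed. $N_{1/k}$ was simply the wrong normalization. Take $t_k=k/n$, as the paper does; any $t_k\asymp k/n$ defined directly also works, whereas requiring $u_{t_k}=k/n$ exactly would force $t_k=\infty$ at $k=n/2$. Then Lemma~\ref{thm:szac_dla_krawtch}(a),(b) give $|\kappa^{(n)}_k(x)-e^{-kx/n}|\lesssim\min\{kx/n,\,n/(kx)\}$ at once, and \eqref{eq32} sums the squares over dyadic $k$.

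\textbf{Short variations: the interpolation step.} Write $\Delta$ for an increment of step $2^g$ inside the block $[2^l,2^{l+1})$, with $m=2^l$. From $|\Delta|\lesssim 2^{g-l}$ and $|\Delta|\lesssim\min\{mx/n,\,n/(mx)\}$, interpolation only gives $|\Delta|\lesssim 2^{\theta(g-l)}\min\{\cdot\}^{1-\theta}$, with exponents adding up to one.
\begin{itemize}
\item Read as a bound on $|\Delta|$, your $2^{g-l}\min\{\cdot\}^{1-\epsilon}$ does not follow from this.
\item Read as a bound on $|\Delta|^2$, summing over the $2^{l-g}$ values of $h$ cancels the factor $2^{g-l}$, and no geometric series in $g$ remains.
\end{itemize}
The choice of $\theta$ is exactly the crux. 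Summing the $2^{l-g}$ squared increments gives $2^{(1-2\theta)(l-g)}\min\{\cdot\}^{2-2\theta}$.
\begin{itemize}
\item For $\theta\leqslant\frac12$ there is no decay in $g$. Already at $\theta=\frac12$ you lose a factor $l+1$ from the $g$-sum, hence $\log^2(n/x)$ after summing over $m$ near $2^l\approx n/x$.
\item For $\theta=1$ there is no decay in $m$, and you lose $\log n$.
\end{itemize}
So one must take $\theta\in(\frac12,1)$. The paper uses $\theta=\frac78$, i.e.\ $|\Delta|^2\lesssim 2^{\frac74(g-l)}\min\{\cdot\}^{1/4}$, together with a weighted Cauchy--Schwarz in $g$. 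Your Minkowski-in-$g$ after Plancherel works equally well, since the decay in $g$ is then uniform in $x$.

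\textbf{An alternative repair.} Your displayed bound is in fact true for $|\Delta|$, even with $\epsilon=0$, but by a different argument. Lemma~\ref{thm:roznica_krawtch} combined with the symmetry $\kappa^{(n)}_k(x)=\kappa^{(n)}_x(k)$ and Lemma~\ref{thm:kappa_jak_e} gives
$|\kappa^{(n)}_k(x)-\kappa^{(n)}_{k-1}(x)|\leqslant\frac{2x}{n}e^{-c(x-1)(k-1)/(n-1)}\lesssim\frac1k\min\{kx/n,\,n/(kx)\}$.
Summing over $2^g$ consecutive $k\in(2^l,2^{l+1}]$ then yields $|\Delta|\lesssim 2^{g-l}\min\{2^lx/n,\,n/(2^lx)\}$. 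This makes the sums over $h$, $g$ and $m$ converge trivially.
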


\begin{proof}[Proof of the implication "Proposition \ref{thm:stw_2} $\Rightarrow$ Proposition \ref{thm:stw_1}"]\phantom{a}\\
    Let us assume that Proposition \ref{thm:stw_2} holds. Let us fix any $r\in (2,\infty)$ and any function $f:\mathbb I^n\to\mathbb C$ satisfying $\widehat f(y)=0$ for $|y|>\frac n2$.\\
    Let us fix any $x\in\mathbb I^n$. Let us take any finite sequence $k_0<...<k_J$, where $k_j\in\{0,...,n\}$.\\
    If $k_0\leqslant\frac n2<k_J$ then there exists $J_0\in\{1,...,J\}$ such that $k_{J_0-1}\leqslant\frac n2$ and $k_{J_0}>\frac n2$. Then
    \begin{equation*}
        \begin{split}
            \Bigg(\sum_{j=1}^J |&S_{k_{j-1}}f(x)-S_{k_j}f(x)|^r\Bigg)^{\frac 1r} \leqslant \Bigg( \sum_{j=1}^{J_0-1} |S_{k_{j-1}}f(x)-S_{k_j}f(x)|^r\Bigg)^{\frac 1r}\\
            &+|S_{k_{J_0-1}}f(x)-S_{k_{J_0}}f(x)| +\Bigg( \sum_{j=J_0+1}^J |S_{k_{j-1}}f(x)-S_{k_j}f(x)|^r\Bigg)^{\frac 1r}\\
            \leqslant &\,\Bigg( \sum_{j=1}^{J_0-1} |S_{k_{j-1}}f(x)-S_{k_j}f(x)|^r\Bigg)^{\frac 1r} +|S_{k_{J_0-1}}f(x)-S_{\lfloor\frac n2\rfloor}f(x)| +|S_{\lfloor\frac n2\rfloor}f(x)|\\
            &+|\!-\!S_{\lfloor\frac n2\rfloor +1}f(x)|+|S_{\lfloor\frac n2\rfloor +1}f(x)-S_{k_{J_0}}f(x)| +\Bigg( \sum_{j=J_0+1}^J |S_{k_{j-1}}f(x)-S_{k_j}f(x)|^r\Bigg)^{\frac 1r}
        \end{split}
    \end{equation*}
    Using H\"older's inequality we continue to estimate
    \begin{equation*}
        \begin{split}
            \leqslant &\, 2^{1-\frac 1r} \Bigg( \sum_{j=1}^{J_0-1} |S_{k_{j-1}}f(x)-S_{k_j}f(x)|^r +|S_{k_{J_0-1}}f(x)-S_{\lfloor\frac n2\rfloor}f(x)|^r \Bigg)^{\frac 1r} + |S_{\lfloor\frac n2\rfloor}f(x)|\\
            &+|S_{\lfloor\frac n2\rfloor +1}f(x)| +2^{1-\frac 1r}\Bigg( |S_{\lfloor\frac n2\rfloor +1}f(x)-S_{k_{J_0}}f(x)|^r+\!\! \sum_{j=J_0+1}^J\!\! |S_{k_{j-1}}f(x)-S_{k_j}f(x)|^r\Bigg)^{\frac 1r}\\
            \lesssim_r\!\!&\;\;V_r(S_kf(x):k\in\{0,...,\big\lfloor\tfrac n2\big\rfloor\})+ |S_{\lfloor\frac n2\rfloor}f(x)|+|S_{\lfloor\frac n2\rfloor +1}f(x)| +V_r(S_kf(x):k\in\{\big\lfloor\tfrac n2\big\rfloor\! +\!1,...,n\}).
        \end{split}
    \end{equation*}
    Let us note that the inequality
    \begin{equation*}
        \begin{split}
            &\!\!\!\!\!\!\!\!\!\!\Bigg(\sum_{j=1}^J |S_{k_{j-1}}f(x)-S_{k_j}f(x)|^r\Bigg)^{\frac 1r}\\
            \lesssim_r&\,V_r(S_kf(x):k\in\{0,...,\big\lfloor\tfrac n2\big\rfloor\})+ |S_{\lfloor\frac n2\rfloor}f(x)|+|S_{\lfloor\frac n2\rfloor +1}f(x)| +V_r(S_kf(x):k\in\{\big\lfloor\tfrac n2\big\rfloor\! +\!1,...,n\})
        \end{split}
    \end{equation*}
    is true regardless of whether $k_0\leqslant\frac n2<k_J$ holds. Hence we have
    \begin{equation}\label{eq16}
        \begin{split}
            V_r(S_kf(x):k\in\{0,...,n\}) \lesssim_r&\,V_r(S_kf(x):k\in\{0,...,\big\lfloor\tfrac n2\big\rfloor\})+ |S_{\lfloor\frac n2\rfloor}f(x)|+|S_{\lfloor\frac n2\rfloor +1}f(x)|\\
            &+V_r(S_kf(x):k\in\{\big\lfloor\tfrac n2\big\rfloor\! +\!1,...,n\}).
        \end{split}
    \end{equation}
    Next, let us take any finite sequence $k_0\!<\!...\!<\!k_J,\;\; k_j\in\{0,...,\big\lfloor\frac n2\big\rfloor\}$.\\
    If $k_0=0$ then
    \begin{equation*}
        \begin{split}
            \Bigg(\sum_{j=1}^J |&S_{k_{j-1}}f(x)-S_{k_j}f(x)|^r\Bigg)^{\frac 1r} \leqslant |S_0f(x)-S_{k_1}f(x)|+\Bigg(\sum_{j=2}^J|S_{k_{j-1}}f(x)-S_{k_j}f(x)|^r\Bigg)^{\frac 1r}\\
            &\!\!\!\!\leqslant |S_0f(x)|+|\!-\!S_1f(x)|+|S_1f(x)-S_{k_1}f(x)| + \Bigg(\sum_{j=2}^J|S_{k_{j-1}}f(x)-S_{k_j}f(x)|^r\Bigg)^{\frac 1r}\qquad\quad
        \end{split}
    \end{equation*}
    Using H\"older's inequality we continue to estimate
    \begin{equation*}
        \begin{split}
            \leqslant &\, |S_0f(x)|+|S_1f(x)| +2^{1-\frac 1r}\Bigg(|S_1f(x)-S_{k_1}f(x)|^r+\sum_{j=2}^J|S_{k_{j-1}}f(x)-S_{k_j}f(x)|^r\Bigg)^{\frac 1r}\\
            \lesssim_r\!\!&\;\;\,|S_0f(x)|+|S_1f(x)|+V_r(S_kf(x):k\in\{1,...,\big\lfloor\tfrac n2\big\rfloor\}).
        \end{split}
    \end{equation*}
    Let us note that the inequality
    \begin{equation*}
        \begin{split}
            \Bigg(\sum_{j=1}^J |S_{k_{j-1}}f(x)-S_{k_j}f(x)|^r\Bigg)^{\frac 1r} \lesssim_r&\;|S_0f(x)|+|S_1f(x)|+V_r(S_kf(x):k\in\{1,...,\big\lfloor\tfrac n2\big\rfloor\})
        \end{split}
    \end{equation*}
    is true regardless of whether $k_0=0$. Hence we have
    \begin{equation}\label{eq17}
        \begin{split}
            V_r(S_kf(x):k\in\{0,...,\big\lfloor\tfrac n2\big\rfloor\}) \lesssim_r&\;|S_0f(x)|+|S_1f(x)|+V_r(S_kf(x):k\in\{1,...,\big\lfloor\tfrac n2\big\rfloor\}).
        \end{split}
    \end{equation}
    Similarly we can show that
    \begin{equation}\label{eq18}
        \begin{split}
            V_r(S_kf(x)&:k\in\{\big\lfloor\tfrac n2\big\rfloor\!+\!1,...,n\})\\
            &\lesssim_rV_r(S_kf(x):k\in\{ \big\lfloor\tfrac n2\big\rfloor\!+\!1,...,n\!-\!1\})+|S_{n-1}f(x)|+|S_nf(x)|.
        \end{split}
    \end{equation}
    Combining \eqref{eq16}, \eqref{eq17}, and \eqref{eq18} we obtain
    \begin{equation*}
        \begin{split}
            V_r(S_kf(x):k&\in\{0,...,n\})\\
            \lesssim_r\!&\sum_{k\in\{0,1,\lfloor\frac n2\rfloor, \lfloor\frac n2\rfloor+1,n-1,n\}} |S_kf(x)|\\
            &+ V_r(S_kf(x):k\!\in\!\{1,...,\big\lfloor\tfrac n2\big\rfloor\})+ V_r(S_kf(x):k\!\in\!\{ \big\lfloor\tfrac n2\big\rfloor\!+\!1,...,n\!-\!1\})
        \end{split}
    \end{equation*}
    so, since $x$ was arbitrary, we get
    \begin{equation*}
        \begin{split}
            \|V_r(S_kf:k&\in\{0,...,n\})\|_2\\
            \lesssim_r\!&\sum_{k\in\{0,1,\lfloor\frac n2\rfloor, \lfloor\frac n2\rfloor+1,n-1,n\}} \|S_kf\|_2\\
            &+ \|V_r(S_kf:k\!\in\!\{1,...,\big\lfloor\tfrac n2\big\rfloor\})\|_2+ \|V_r(S_kf:k\!\in\!\{ \big\lfloor\tfrac n2\big\rfloor\!+\!1,...,n\!-\!1\})\|_2\\
            =:\!&\;\,\mathcal I_1+\mathcal I_2+\mathcal I_3.
        \end{split}
    \end{equation*}
    By Lemma \ref{thm:Sk_kontrakcja} we get $\mathcal I_1\leqslant 6\|f\|_2$. Moreover we have $\mathcal I_2\lesssim_r\|f\|_2$ by the assumption that Proposition \ref{thm:stw_2} holds. Next, for every $k\in\{0,...,n\}$ and $x\in\mathbb I^n$ we have
    \begin{equation*}
        \begin{split}
            S_kf(x\oplus \mathbf 1_n)&=\frac 1{|R_k|}\sum_{y\in R_k} f(x\oplus \mathbf 1_n\oplus y) =\frac 1{|R_{n-k}|}\sum_{w\in R_{n-k}}f(x\oplus w)=S_{n-k}f(x)
        \end{split}
    \end{equation*}
    (see \cite[page 63]{HKS}). Now let us take any $x\in\mathbb I^n$ and any finite sequence $k_0\!<\!...\!<\!k_J,\;\; k_j\!\in\!\{\big\lfloor\tfrac n2\big\rfloor\!+\!1,...,n\!-\!1\}$. We have
    \begin{equation*}
        \begin{split}
            \Bigg(\sum_{j=1}^J & \,|S_{k_{j-1}}f(x\oplus \mathbf 1_n)-S_{k_j}f(x\oplus \mathbf 1_n)|^r\Bigg)^{\frac 1r} =\Bigg(\sum_{j=1}^J |S_{n-k_{j-1}}f(x)-S_{n-k_j}f(x)|^r\Bigg)^{\frac 1r}\\
            &\leqslant V_r(S_kf(x):k\!\in\!\{1,...,\big\lceil\tfrac n2\big\rceil\!-\!1\})\leqslant V_r(S_kf(x):k\!\in\!\{1,...,\big\lfloor\tfrac n2\big\rfloor\}).
        \end{split}
    \end{equation*}
    Thus
    \begin{equation*}
        V_r(S_kf(x\oplus \mathbf 1_n):k\!\in\!\{\big\lfloor\tfrac n2\big\rfloor\!+\!1,...,n\!-\!1\})\leqslant V_r(S_kf(x):k\!\in\!\{1,...,\big\lfloor\tfrac n2\big\rfloor\}).
    \end{equation*}
    Hence we obtain
    \begin{equation*}
        \begin{split}
            \mathcal I_3&=\Bigg(\sum_{x\in\mathbb I^n} V_r(S_kf(x\oplus \mathbf 1_n):k\!\in\!\{\big\lfloor\tfrac n2\big\rfloor\!+\!1,...,n\!-\!1\})^2\Bigg)^{\frac 12}\\
            &\leqslant \Bigg(\sum_{x\in\mathbb I^n} V_r(S_kf(x):k\!\in\!\{1,...,\big\lfloor\tfrac n2\big\rfloor\})^2\Bigg)^{\frac 12}= \mathcal I_2 \lesssim_r\|f\|_2.
        \end{split}
    \end{equation*}
    Finally we get
    \begin{equation*}
        \|V_r(S_kf:k\in\{0,...,n\})\|_2\lesssim_r\mathcal I_1+\mathcal I_2+\mathcal I_3\lesssim_r\|f\|_2.
    \end{equation*}
\end{proof}

\begin{proof}[Proof of Proposition \ref{thm:stw_2}]\phantom{a}\\
    Let us fix any $r\in (2,\infty)$ and any function $f:\mathbb I^n\to\mathbb C$ satisfying $\widehat f(y)=0$ for $|y|>\frac n2$.\\
    For every $x\in\mathbb I^n$, using the inequality \eqref{eq08} we get
    \begin{equation*}
        \begin{split}
            &\!\!\!\!\!\! V_r(S_kf(x):k\in \{ 1,...,\big\lfloor\tfrac n2\big\rfloor \})\\
            \lesssim &\, V_r(S_kf(x):k\in\mathbb D\cap \{ 1,...,\big\lfloor\tfrac n2\big\rfloor \} ) +\left( \sum_{m\in\mathbb D}V_r( S_kf(x) :k\in [m,2m)\cap \{ 1,...,\big\lfloor\tfrac n2\big\rfloor \} )^r \right)^{\frac 1r}.
        \end{split}
    \end{equation*}
    Hence
    \begin{equation}\label{eq31}
        \begin{split}
            &\!\!\!\!\!\!\|V_r(S_kf:k\in \{ 1,...,\big\lfloor\tfrac n2\big\rfloor \})\|_2\\
            \lesssim &\:\|V_r(S_kf:k\in\mathbb D\cap \{ 1,...,\big\lfloor\tfrac n2\big\rfloor \} )\|_2 +\Bigg\|\left( \sum_{m\in\mathbb D}V_r( S_kf :k\in [m,2m)\cap \{ 1,...,\big\lfloor\tfrac n2\big\rfloor \} )^r \right)^{\frac 1r}\Bigg\|_2\\
            &\!\!\!\!\!\! =:\mathcal I_1+\mathcal I_2.
        \end{split}
    \end{equation}
    In order to complete the proof, it suffices to estimate $\mathcal I_1$ and $\mathcal I_2$ by $\|f\|_2$ up to some multiplicative positive constants dependent only on $r$.\\
    Let us first observe that for $\alpha,\beta\in (0,\infty)$ we have
    \begin{equation*}
        \sum_{l\in\mathbb Z}\min\Big\{\alpha 2^l,\frac 1{\alpha 2^l}\Big\}^\beta =\sum_{\begin{smallmatrix}
            l\in\mathbb Z\\
            \alpha 2^l\leqslant 1
        \end{smallmatrix}} \big(\alpha 2^l\big)^\beta + \sum_{\begin{smallmatrix}
            l\in\mathbb Z\\
            \alpha 2^l>1
        \end{smallmatrix}} \Big(\frac 1{\alpha 2^l}\Big)^\beta
    \end{equation*}
    We denote $L:=\max\{ l\in\mathbb Z:\alpha 2^l\leqslant 1\}$ and estimate
    \begin{equation*}
        \begin{split}
            \sum_{l\in\mathbb Z}\min\Big\{\alpha 2^l,\frac 1{\alpha 2^l}\Big\}^\beta &=\sum_{m=0}^\infty \big(\alpha 2^{L-m}\big)^\beta +\sum_{m=0}^\infty \Big(\frac 1{\alpha 2^{L+1+m}}\Big)^\beta\\
            &=\underbrace{\big(\alpha 2^L\big)^\beta}_{\quad\leqslant 1} \sum_{m=0}^\infty 2^{-m\beta}+\underbrace{\Big(\frac 1{\alpha 2^{L+1}}\Big)^\beta}_{\quad\leqslant 1} \sum_{m=0}^\infty 2^{-m\beta}\leqslant\frac 2{1-2^{-\beta}}.
        \end{split}
    \end{equation*}
    Hence we get
    \begin{equation}\label{eq32}
        \sum_{l\in\mathbb Z}\min\Big\{\alpha 2^l,\frac 1{\alpha 2^l}\Big\}^\beta\lesssim_\beta 1\qquad\text{for }\alpha,\beta\in (0,\infty).
    \end{equation}
    Now we move on to obtaining the estimates $\mathcal I_1\lesssim_r\|f\|_2$ and $\mathcal I_2\lesssim_r\|f\|_2$.
    
    \phantom{a}\\
    {\bf Estimate of $\mathcal I_1$.}\\
    Let us recall that $\mathcal I_1$ was defined in \eqref{eq31}. Using the triangle inequality \eqref{eq05} and the inequality \eqref{eq06} we get
    \begin{equation}\label{eq19}
        \begin{split}
            \mathcal I_1\leqslant &\;\Big\|V_r\Big(N_{\frac kn}f:k\in\mathbb D\cap\big\{1,...,\big\lfloor\tfrac n2\big\rfloor\big\}\Big)\Big\|_2 +\Big\|V_r\Big(\Big(S_k-N_{\frac kn}\Big)f:k\in\mathbb D\cap\big\{1,...,\big\lfloor\tfrac n2\big\rfloor\big\}\Big)\Big\|_2\\
            \leqslant &\;\Big\|V_r\Big(N_{\frac kn}f:k\in\mathbb D\cap\big\{1,...,\big\lfloor\tfrac n2\big\rfloor\big\}\Big)\Big\|_2 +2\Bigg\|\Bigg(\sum_{k\in\mathbb D\cap\{1,...,\lfloor\frac n2\rfloor\}} \Big|\Big(S_k-N_{\frac kn}\Big)f\Big|^r\Bigg)^{\frac 1r}\Bigg\|_2\\
            \leqslant &\;\Big\|V_r\Big(N_{\frac kn}f:k\in\mathbb D\cap\big\{1,...,\big\lfloor\tfrac n2\big\rfloor\big\}\Big)\Big\|_2 +2\Bigg\|\Bigg(\sum_{k\in\mathbb D\cap\{1,...,\lfloor\frac n2\rfloor\}} \Big|\Big(S_k-N_{\frac kn}\Big)f\Big|^2\Bigg)^{\frac 12}\Bigg\|_2.
        \end{split}
    \end{equation}
    By Lemma \ref{thm:Nt_sym_polgr_dyfuzji} we can apply \cite[Theorem 3.3]{JR} (see also \cite[(5.3)]{BMSW}) to $(N_t:t\in [0,\infty))$ and we obtain
    \begin{equation*}
        \big\|V_r\big(N_tf:t\in (0,\infty)\big)\big\|_2\lesssim_r\|f\|_2.
    \end{equation*}
    Hence using \eqref{eq23} we get
    \begin{equation}\label{eq20}
        \begin{split}
            \Big\|V_r\Big(N_{\frac kn}f:k\in\mathbb D\cap\big\{1,...,\big\lfloor\tfrac n2\big\rfloor\big\}\Big)\Big\|_2&= \Big\|V_r\Big(N_tf:t\in\tfrac 1n\big(\mathbb D\cap\big\{1,...,\big\lfloor\tfrac n2\big\rfloor\big\}\big)\Big)\Big\|_2 \\
            &\leqslant\big\|V_r\big(N_tf:t\in (0,\infty)\big)\big\|_2\lesssim_r\|f\|_2.
        \end{split}
    \end{equation}
    Now we will show that
    \begin{equation}\label{eq21}
        \Bigg\|\Bigg(\sum_{k\in\mathbb D\cap\{1,...,\lfloor\frac n2\rfloor\}} \Big|\Big(S_k-N_{\frac kn}\Big)f\Big|^2\Bigg)^{\frac 12}\Bigg\|_2^2\lesssim\|f\|_2^2.
    \end{equation}
    We calculate
    \begin{equation*}
        \begin{split}
            \Bigg\|\Bigg(\sum_{k\in\mathbb D\cap\{1,...,\lfloor\frac n2\rfloor\}} \Big|\Big(S_k-N_{\frac kn}\Big)f\Big|^2\Bigg)^{\frac 12}\Bigg\|_2^2 &=\sum_{x\in\mathbb I^n}\; \sum_{k\in\mathbb D\cap\{1,...,\lfloor\frac n2\rfloor\}} \Big|\Big(S_k-N_{\frac kn}\Big)f(x)\Big|^2\\
            %&= \sum_{k\in\mathbb D\cap\{1,...,\lfloor\frac n2\rfloor\}}\; \sum_{x\in\mathbb I^n} \Big|\Big(S_k-N_{\frac kn}\Big)f(x)\Big|^2\\
            &= \sum_{k\in\mathbb D\cap\{1,...,\lfloor\frac n2\rfloor\}} \Big\|\Big(S_k-N_{\frac kn}\Big)f\Big\|_2^2.
        \end{split}
    \end{equation*}
    Using \eqref{eq01}, \eqref{eq47}, and \eqref{eq48}, and next the orthonormality of the system $(\widetilde{\chi_y}:y\in\mathbb I^n)$, we get
    \begin{equation}\label{eq22}
        \begin{split}
            &\!\Bigg\|\Bigg(\sum_{k\in\mathbb D\cap\{1,...,\lfloor\frac n2\rfloor\}} \Big|\Big(S_k-N_{\frac kn}\Big)f\Big|^2\Bigg)^{\frac 12}\Bigg\|_2^2 = \sum_{k\in\mathbb D\cap\{1,...,\lfloor\frac n2\rfloor\}} \left\| \sum_{\begin{smallmatrix}
                y\in\mathbb I^n\\
                |y|\leqslant\frac n2
            \end{smallmatrix}}\widehat f(y)\Big(\kappa^{(n)}_k(|y|)-e^{-\frac{k|y|}n}\Big)\widetilde{\chi_y} \right\|_2^2\\
            &= \sum_{k\in\mathbb D\cap\{1,...,\lfloor\frac n2\rfloor\}}\; \sum_{\begin{smallmatrix}
                y\in\mathbb I^n\\
                |y|\leqslant\frac n2
            \end{smallmatrix}} \Big| \widehat f(y)\Big(\kappa^{(n)}_k(|y|)-e^{-\frac{k|y|}n}\Big) \Big|^2 = \sum_{\begin{smallmatrix}
                y\in\mathbb I^n\\
                |y|\leqslant\frac n2
            \end{smallmatrix}} \Big|\widehat f(y)\Big|^2\Phi(|y|),
        \end{split}
    \end{equation}
    where
    \begin{equation*}
        \Phi(x):= \sum_{k\in\mathbb D\cap\{1,...,\lfloor\frac n2\rfloor\}} \Big| \kappa^{(n)}_k(x)-e^{-\frac{kx}n} \Big|^2\qquad\text{for }x\in\big\{0,...,\big\lfloor\tfrac n2\big\rfloor\big\}.
    \end{equation*}
    In view of the equality \eqref{eq22} and the equality
    \begin{equation*}
        \|f\|_2^2= \sum_{\begin{smallmatrix}
                y\in\mathbb I^n\\
                |y|\leqslant\frac n2
            \end{smallmatrix}} \Big|\widehat f(y)\Big|^2,
    \end{equation*}
    in order to prove \eqref{eq21} it suffices to show that uniformly
    \begin{equation}\label{eq30}
        \Phi(x)\lesssim 1\qquad\text{for } x\in\{0,...,\big\lfloor\tfrac n2\big\rfloor\}.
    \end{equation}
    Let us take any $x\in\{0,...,\big\lfloor\tfrac n2\big\rfloor\}$. If $x=0$ then we easily see that $\Phi(x)=0$. So now let us assume that $x>0$.\\
    For any $k\in\mathbb D\cap\{1,...,\big\lfloor\frac n2\big\rfloor\}$, by Lemma \ref{thm:szac_dla_krawtch}(a) we get
    \begin{equation}\label{eq33}
        \Big| \kappa^{(n)}_k(x)-e^{-\frac{kx}n} \Big|\leqslant \Big| \kappa^{(n)}_k(x)-1\Big|+\Big|1-e^{-\frac{kx}n} \Big|\lesssim\frac{kx}n
    \end{equation}
    while by Lemma \ref{thm:szac_dla_krawtch}(b) we get
    \begin{equation}\label{eq34}
        \Big| \kappa^{(n)}_k(x)-e^{-\frac{kx}n} \Big|\leqslant \Big| \kappa^{(n)}_k(x)\Big|+e^{-\frac{kx}n}\lesssim\frac n{kx}.
    \end{equation}
    Now by \eqref{eq33} and \eqref{eq34} we get
    \begin{equation*}
        \Phi(x)\lesssim\sum_{k\in\mathbb D\cap\{1,...,\lfloor\frac n2\rfloor\}} \min\Big\{\frac{kx}n,\frac n{kx}\Big\}^2\leqslant\sum_{l\in\mathbb Z} \min\Big\{\frac{2^lx}n,\frac n{2^lx}\Big\}^2\lesssim 1,
    \end{equation*}
    where the last inequality follows from \eqref{eq32}.\\
    Thus \eqref{eq30} and \eqref{eq21} are proved. Combining \eqref{eq19}, \eqref{eq20}, and \eqref{eq21} we obtain
    \begin{equation*}
        \mathcal I_1\lesssim_r\|f\|_2.
    \end{equation*}

    \phantom{a}\\
    {\bf Estimate of $\mathcal I_2$.}\\
    Let us recall that $\mathcal I_2$ was defined in \eqref{eq31}. Using the inequality \eqref{eq07} we get
    \begin{equation*}
        \begin{split}
            \mathcal I_2&\leqslant\Bigg\|\Bigg( \sum_{m\in\mathbb D}V_2\Big( S_kf :k\in \big[m,2m\big)\cap \big\{ 1,...,\big\lfloor\tfrac n2\big\rfloor\big\} \Big)^r \Bigg)^{\frac 1r}\Bigg\|_2\\
            &\leqslant\Bigg\|\Bigg( \sum_{m\in\mathbb D}V_2\Big( S_kf :k\in \big[m,2m\big)\cap \big\{ 1,...,\big\lfloor\tfrac n2\big\rfloor \big\} \Big)^2 \Bigg)^{\frac 12}\Bigg\|_2\\
            &= \Bigg\|\Bigg( \sum_{l=0}^\infty V_2\Big( S_kf :k\in \big[2^l,2^{l+1}\big)\cap \big\{ 1,...,\big\lfloor\tfrac n2\big\rfloor \big\} \Big)^2 \Bigg)^{\frac 12}\Bigg\|_2\\
            &= \Bigg\|\Bigg( \sum_{l=0}^\infty V_2\Big( S_kf :k\in \big\{2^l,...,2^{l+1}\!-\!1\big\}\cap \big(\!-\!\infty,\big\lfloor\tfrac n2\big\rfloor \big] \Big)^2 \Bigg)^{\frac 12}\Bigg\|_2.
        \end{split}
    \end{equation*}
    Hence, using \eqref{eq23} we get
    \begin{equation}\label{eq24}
        \begin{split}
            \mathcal I_2^2&\leqslant \Bigg\|\Bigg( \sum_{l=0}^\infty V_2\Big( S_{k'+2^l}f :k'\in \big\{0,...,2^l\!-\!1\big\}\cap \big(\!-\!\infty,\big\lfloor\tfrac n2\big\rfloor\!-\!2^l \big] \Big)^2 \Bigg)^{\frac 12}\Bigg\|_2^2\\
            &= \sum_{x\in\mathbb I^n} \sum_{l=0}^\infty V_2\Big( S_{k+2^l}f(x) :k\in \big\{0,...,2^l\!-\!1\big\}\cap \big(\!-\!\infty,\big\lfloor\tfrac n2\big\rfloor\!-\!2^l \big] \Big)^2.
        \end{split}
    \end{equation}
    For every $x\in\mathbb I^n$ and $l\in\{0,1,2,...\}$, by Lemma \ref{thm:usuniecie_poln_war} we get
    \begin{equation*}
        \begin{split}
            V_2\Big(S_{k+2^l}&f(x) :k\in \big\{0,...,2^l\big\}\cap \big(\!-\!\infty,\big\lfloor\tfrac n2\big\rfloor\!-\!2^l \big] \Big)\\
            &\leqslant\sqrt 2 \sum_{g=0}^l\left( \sum_{\begin{smallmatrix}
                h\in\{1,...,2^{l-g}\}\\
                h2^g\leqslant \lfloor\frac n2\rfloor -2^l
            \end{smallmatrix}} \big|S_{(h-1)2^g+2^l}f(x)-S_{h2^g+2^l}f(x)\big|^2 \right)^{\frac 12}\\
            &=\sqrt 2\sum_{g'=0}^l 2^{-\frac 14g'}\left( 2^{\frac 14g'}\left( \sum_{\begin{smallmatrix}
                h\in\{1,...,2^{g'}\}\\
                h2^{l-g'}+2^l\leqslant \lfloor\frac n2\rfloor
            \end{smallmatrix}} \big|\big(S_{(h-1)2^{l-g'}+2^l}-S_{h2^{l-g'}+2^l}\big)f(x)\big|^2 \right)^{\frac 12}\right)
        \end{split}
    \end{equation*}
    Using Schwarz inequality we continue to estimate
    \begin{equation*}
        \begin{split}
            &\leqslant\sqrt 2\Bigg(\sum_{g=0}^l 2^{-\frac 12g}\Bigg)^{\frac 12}\left( \sum_{g=0}^l 2^{\frac 12g} \sum_{\begin{smallmatrix}
                h\in\{1,...,2^g\}\\
                h2^{l-g}+2^l\leqslant \lfloor\frac n2\rfloor
            \end{smallmatrix}} \big|\big(S_{(h-1)2^{l-g}+2^l}-S_{h2^{l-g}+2^l}\big)f(x)\big|^2 \right)^{\frac 12}\\
            &\lesssim \left( \sum_{g=0}^l 2^{\frac 12g} \sum_{\begin{smallmatrix}
                h\in\{1,...,2^g\}\\
                h2^{l-g}+2^l\leqslant \lfloor\frac n2\rfloor
            \end{smallmatrix}} \big|\big(S_{(h-1)2^{l-g}+2^l}-S_{h2^{l-g}+2^l}\big)f(x)\big|^2 \right)^{\frac 12}.
        \end{split}
    \end{equation*}
    Hence by \eqref{eq24}, we obtain
    \begin{equation*}
        \begin{split}
            \mathcal I_2^2 &\lesssim\sum_{x\in\mathbb I^n}\sum_{l=0}^\infty \sum_{g=0}^l 2^{\frac 12g} \sum_{\begin{smallmatrix}
                h\in\{1,...,2^g\}\\
                h2^{l-g}+2^l\leqslant \lfloor\frac n2\rfloor
            \end{smallmatrix}} \big|\big(S_{(h-1)2^{l-g}+2^l}-S_{h2^{l-g}+2^l}\big)f(x)\big|^2\\
            &= \sum_{l=0}^\infty \sum_{g=0}^l 2^{\frac 12g} \sum_{\begin{smallmatrix}
                h\in\{1,...,2^g\}\\
                h2^{l-g}+2^l\leqslant \lfloor\frac n2\rfloor
            \end{smallmatrix}} \big\|\big(S_{(h-1)2^{l-g}+2^l}-S_{h2^{l-g}+2^l}\big)f\big\|_2^2.
        \end{split}
    \end{equation*}
    Using \eqref{eq01} and \eqref{eq47}, and next the orthonormality of the system $(\widetilde{\chi_y}:y\in\mathbb I^n)$, we get
    \begin{equation}\label{eq28}
        \begin{split}
            \mathcal I_2^2 &\lesssim \sum_{l=0}^\infty \sum_{g=0}^l 2^{\frac 12g} \sum_{\begin{smallmatrix}
                h\in\{1,...,2^g\}\\
                h2^{l-g}+2^l\leqslant \lfloor\frac n2\rfloor
            \end{smallmatrix}} \left\| \sum_{\begin{smallmatrix}
                y\in\mathbb I^n\\
                |y|\leqslant\frac n2
            \end{smallmatrix}}\widehat f(y) \Big(\kappa^{(n)}_{(h-1)2^{l-g}+2^l}(|y|)-\kappa^{(n)}_{h2^{l-g}+2^l}(|y|)\Big) \widetilde{\chi_y} \right\|_2^2\\
            &= \sum_{l=0}^\infty \sum_{g=0}^l 2^{\frac 12g} \sum_{\begin{smallmatrix}
                h\in\{1,...,2^g\}\\
                h2^{l-g}+2^l\leqslant \lfloor\frac n2\rfloor
            \end{smallmatrix}} \sum_{\begin{smallmatrix}
                y\in\mathbb I^n\\
                |y|\leqslant\frac n2
            \end{smallmatrix}} \Big|\widehat f(y) \Big(\kappa^{(n)}_{(h-1)2^{l-g}+2^l}(|y|)-\kappa^{(n)}_{h2^{l-g}+2^l}(|y|)\Big)\Big|^2\\
            &= \sum_{\begin{smallmatrix}
                y\in\mathbb I^n\\
                |y|\leqslant\frac n2
            \end{smallmatrix}} \Big|\widehat f(y)\Big|^2\Psi(|y|),
        \end{split}
    \end{equation}
    where
    \begin{equation*}
        \Psi(x):= \sum_{l=0}^\infty \sum_{g=0}^l 2^{\frac 12g} \sum_{\begin{smallmatrix}
                h\in\{1,...,2^g\}\\
                h2^{l-g}+2^l\leqslant \lfloor\frac n2\rfloor
            \end{smallmatrix}} \Big|\kappa^{(n)}_{(h-1)2^{l-g}+2^l}(x)-\kappa^{(n)}_{h2^{l-g}+2^l}(x)\Big|^2\qquad\text{for } x\in\big\{0,...,\big\lfloor\tfrac n2\big\rfloor\big\}.
    \end{equation*}
    We will prove that uniformly
    \begin{equation}\label{eq29}
        \Psi(x)\lesssim 1\qquad\text{for } x\in\big\{0,...,\big\lfloor\tfrac n2\big\rfloor\big\}.
    \end{equation}
    Then by \eqref{eq28} we will get
    \begin{equation*}
        \mathcal I_2^2\lesssim \sum_{\begin{smallmatrix}
                y\in\mathbb I^n\\
                |y|\leqslant\frac n2
            \end{smallmatrix}} \Big|\widehat f(y)\Big|^2=\|f\|_2^2,
    \end{equation*}
    which is our goal.\\
    Let us take any $x\in\big\{0,...,\big\lfloor\tfrac n2\big\rfloor\big\}$. If $x=0$ then we easily see that $\Psi(x)=0$. So now let us assume that $x>0$.\\
    For every $l\in\{0,1,2,...\},\;\; g\in\{0,...,l\}$, and $h\in\{1,...,2^g\}$ such that $h2^{l-g}+2^l\leqslant\big\lfloor\frac n2\big\rfloor$, by Lemma \ref{thm:szac_dla_krawtch}(a) we get
    \begin{equation*}
        \begin{split}
            \Big|\kappa^{(n)}_{(h-1)2^{l-g}+2^l}(x)-\kappa^{(n)}_{h2^{l-g}+2^l}(x)\Big| &\leqslant \Big|\kappa^{(n)}_{(h-1)2^{l-g}+2^l}(x)-1\Big|+\Big|1-\kappa^{(n)}_{h2^{l-g}+2^l}(x)\Big|\\
            &\lesssim\frac{((h-1)2^{l-g}+2^l)x}n+ \frac{(h2^{l-g}+2^l)x}n\leqslant 4\frac{2^lx}n
        \end{split}
    \end{equation*}
    while by Lemma \ref{thm:szac_dla_krawtch}(b) we have
    \begin{equation*}
        \begin{split}
            \Big|\kappa^{(n)}_{(h-1)2^{l-g}+2^l}(x)-\kappa^{(n)}_{h2^{l-g}+2^l}(x)\Big| &\leqslant \Big|\kappa^{(n)}_{(h-1)2^{l-g}+2^l}(x)\Big|+\Big|\kappa^{(n)}_{h2^{l-g}+2^l}(x)\Big|\\
            &\lesssim\frac n{((h-1)2^{l-g}+2^l)x}+ \frac n{(h2^{l-g}+2^l)x}\leqslant 2\frac n{2^lx}.
        \end{split}
    \end{equation*}
    In summary, the above two inequalities imply
    \begin{equation}\label{eq37}
        \Big|\kappa^{(n)}_{(h-1)2^{l-g}+2^l}(x)-\kappa^{(n)}_{h2^{l-g}+2^l}(x)\Big|\lesssim\min\Big\{\frac{2^lx}n,\frac n{2^lx}\Big\}.
    \end{equation}
    Next, by Lemma \ref{thm:szac_dla_krawtch}(c) we get
    \begin{equation}\label{eq38}
        \begin{split}
            \Big|\kappa^{(n)}_{(h-1)2^{l-g}+2^l}&(x)-\kappa^{(n)}_{h2^{l-g}+2^l}(x)\Big|\leqslant\sum_{k=(h-1)2^{l-g}+2^l+1}^{h2^{l-g}+2^l} \Big|\kappa^{(n)}_{k-1}(x)-\kappa^{(n)}_k(x)\Big|\\
            &\lesssim \sum_{k=(h-1)2^{l-g}+2^l+1}^{h2^{l-g}+2^l} \frac 1k\leqslant \sum_{k=(h-1)2^{l-g}+2^l+1}^{h2^{l-g}+2^l} \frac 1{2^l}=2^{l-g}\frac 1{2^l}=2^{-g}.
        \end{split}
    \end{equation}
    By \eqref{eq37} and \eqref{eq38} we obtain
    \begin{equation*}
        \Big|\kappa^{(n)}_{(h-1)2^{l-g}+2^l}(x)-\kappa^{(n)}_{h2^{l-g}+2^l}(x)\Big|^2\lesssim\min\Big\{\frac{2^lx}n,\frac n{2^lx}\Big\}^{\frac 14}2^{-\frac 74g}.
    \end{equation*}
    Hence for every $l$ and $g$ we get
    \begin{equation*}
        \begin{split}
            2^{\frac 12g} \sum_{\begin{smallmatrix}
                h\in\{1,...,2^g\}\\
                h2^{l-g}+2^l\leqslant \lfloor\frac n2\rfloor
            \end{smallmatrix}} \Big|\kappa^{(n)}_{(h-1)2^{l-g}+2^l}(x)-\kappa^{(n)}_{h2^{l-g}+2^l}(x)\Big|^2&\lesssim 2^{\frac 12g}2^g \min\Big\{\frac{2^lx}n,\frac n{2^lx}\Big\}^{\frac 14}2^{-\frac 74g}\\
            &=2^{-\frac 14g} \min\Big\{\frac{2^lx}n,\frac n{2^lx}\Big\}^{\frac 14}.
        \end{split}
    \end{equation*}
    Hence by \eqref{eq32} we obtain
    \begin{equation*}
        \Psi(x)\lesssim\sum_{l=0}^\infty \sum_{g=0}^l 2^{-\frac 14g} \min\Big\{\frac{2^lx}n,\frac n{2^lx}\Big\}^{\frac 14}\lesssim\sum_{l=0}^\infty \min\Big\{\frac{2^lx}n,\frac n{2^lx}\Big\}^{\frac 14}\lesssim 1.
    \end{equation*}
    Thus \eqref{eq29} is proved.
\end{proof}

\end{document}